\theoremstyle{plain}
\newtheorem{theorem}{Theorem}[section]
\newtheorem{lemma}[theorem]{Lemma}
\newtheorem{cor}[theorem]{Corollary}
\newtheorem{prop}[theorem]{Proposition}
\theoremstyle{definition}
\newtheorem{defi}[theorem]{Definition}
\theoremstyle{remark}
\newtheorem{rem}[theorem]{Remark}
\numberwithin{equation}{section}
\newcommand{\gf}{\ensuremath{\mathbb{K}}}
\newcommand{\invlim}[2]{\ensuremath{\varprojlim_{#1} #2}}
\newcommand{\innprod}{\ensuremath{\langle -,- \rangle}}
\newcommand{\cotimes}{\ensuremath{\hat{\otimes}}}
\newcommand{\noproof}{\begin{flushright} \ensuremath{\square} \end{flushright}}
\newcommand{\eext}[1]{\ensuremath{E_{\mathrm{ext}}(#1)}}
\newcommand{\eint}[1]{\ensuremath{E_{\mathrm{int}}(#1)}}
\newcommand{\vext}[1]{\ensuremath{V_{\mathrm{ext}}(#1)}}
\newcommand{\vint}[1]{\ensuremath{V_{\mathrm{int}}(#1)}}
\newcommand{\chord}[1]{\ensuremath{\mathcal{C}(#1)}}
\DeclareMathOperator{\id}{id}
\DeclareMathOperator{\Der}{Der}
\DeclareMathOperator{\Aut}{Aut}
\DeclareMathOperator{\Bij}{Bij}
\begin{document}
\title{A nondiagrammatic description of the Connes-Kreimer Hopf algebra}
\author{Alastair Hamilton}
\address{Department of Mathematics and Statistics, Texas Tech University, Lubbock, TX 79409-1042. USA.} \email{alastair.hamilton@ttu.edu}
\begin{abstract}
We demonstrate that the fundamental algebraic structure underlying the Connes-Kreimer Hopf algebra -- the insertion pre-Lie structure on graphs -- corresponds directly to the canonical pre-Lie structure of polynomial vector fields. Using this fact, we construct a Hopf algebra built from tensors that is isomorphic to a version of the Connes-Kreimer Hopf algebra that first appeared in the perturbative renormalization of quantum field theories.
\end{abstract}
\keywords{Renormalization, Hopf algebra, graph, pre-Lie algebra, invariant theory, BPHZ algorithm}
\subjclass[2010]{16T05, 16T30, 17B35, 17B66, 81T15, 81T18}
\maketitle

%
%
%

\section{Introduction}

\subsection{Background}

In this paper we provide a nondiagrammatic description of the Connes-Kreimer Hopf algebra introduced in \cite{CKhopf} that underlies the perturbative renormalization of quantum field theories. This Hopf algebra was constructed by Connes and Kreimer through the use of a certain fundamental algebraic structure on graphs \cite{CKelim}; a pre-Lie structure in which the pre-Lie bracket of two graphs is described by inserting one graph into the other. The Connes-Kreimer Hopf algebra is then realized as the universal enveloping algebra of the resulting Lie algebra. Connes and Kreimer demonstrated that their Hopf algebra encoded the complicated graphical combinatorics present in the BPHZ renormalization algorithm \cite{bogpar}, \cite{hepp}, \cite{zimmer}; in that the Feynman amplitudes of the corresponding field theory were realized as a loop of characters in the Hopf algebra and the renormalized values of these Feynman amplitudes were obtained through the Birkhoff factorization of this loop.

We should be careful to explain what we mean by a nondiagrammatic description of this Hopf algebra, as one may consider Connes and Kreimer's original description to be nondiagrammatic in the sense that it formulated the problem of renormalization through the Birkhoff factorization. Here we are referring to the fact that the Connes-Kreimer Hopf algebra is constructed from combinatorial objects, namely graphs. In this paper we construct a Hopf algebra which is isomorphic to a version of the Connes-Kreimer Hopf algebra and that is built from tensors rather than graphs. Here we draw upon the work of Kontsevich \cite{kont} in which he showed how to describe graph complexes in terms of tensors using the invariant theory of the symplectic linear group. Using this correspondence, we demonstrate that the insertion pre-Lie structure on graphs corresponds directly to the canonical pre-Lie structure on polynomial vector fields. This fact is then used to construct our Hopf algebra.

Let us take a moment to point out some distinctions between the algebraic structures that we consider in this paper and those considered in \cite{CKhopf}. Firstly, the most significant distinction is that our graphs are not decorated by external parameters such as space-time points or momenta. This is not so much because these structures are incompatible with the mathematical framework that we present here, but rather because including them reduces the level of generality that we work in and introduces another level of technicalities. Such decorations will in general depend on the quantum field theory that one is considering, whereas here we prefer to focus simply on the underlying algebraic structures. In \cite{CKhopf} Connes and Kreimer construct their Hopf algebra for $\phi^3$ theory in six dimensions, although they point out that their results can be extended to any renormalizable quantum field theory. In some sense, it would be nice to have some universal Hopf algebra whose graphs are not decorated by external parameters, whereby the external parameters appear for any quantum field theory as part of the representation of the Hopf algebra, and in which the problem of renormalization is formulated through the factorization of characters as above. However, the formula (cf. Equation (6) of \cite{CKhopf}) for the diagonal of the Connes-Kreimer Hopf algebra seems to twist the graphs and their external structures together, making this proposal appear nontrivial to realize. If it were not for this fact, the difference between graphs with or without external parameters would not be material.

Secondly, the Hopf algebra we consider here is generated by all graphs and not just those that are one particle irreducible. This is due to the fact that there seems to be no a priori way to pick out one particle irreducible graphs in our mathematical framework of tensors. This distinction however is not material to the subject of renormalization. The reason for only considering one particle irreducible graphs is that they are the only graphs that require counterterms (cf. Equation (5.5.3) of \cite{collins}). One may just as easily formulate the BPHZ renormalization algorithm for all graphs, cf. \cite[\S 5.3]{collins}.

Finally, as mentioned above, in this paper we concern ourselves only with the description of the algebraic structures involved and not with the concomitant matters regarding the renormalization of certain quantum field theories. Partly, this is because we prefer to focus on the connection between the insertion pre-Lie structure on graphs and the canonical pre-Lie structure of polynomial vector fields. Partly, it is also because, for the reasons outlined above, making this connection is not completely straightforward. It is known \cite[\S 5.6]{collins}, \cite{costeff} that one may formulate the process of renormalization without resorting to the graphical combinatorics of the BPHZ algorithm. In these settings, counterterms are computed iteratively and subtracted from the overall Lagrangian. One might hope that the nondiagrammatic descriptions of the algebraic structures of Connes and Kreimer that we provide here may provide a way to bring their methods and ideas to this setting.

Throughout the paper we work with formal objects. For example, our tendency is to work with algebras of power series rather than polynomials. This tendency arises due to the way in which the Connes-Kreimer Hopf algebra is constructed as the dual of a commutative noncocommutative Hopf algebra. This leads to the consideration of objects which have a natural inverse limit structure and a corresponding topology. In this sense, we often consider Hopf algebras in the category of profinite vector spaces, which are a formally complete version of ordinary Hopf algebras and are in one-to-one correspondence with them in the sense that one notion is dual to the other. This distinction causes few real problems beyond some irritating technicalities concerning the convergence of certain expressions involving infinite summations, however it may be unfamiliar to some readers. Consequently, we provide a brief description of the necessary background in Section \ref{sec_notcon} that follows.

The breakdown of the paper is as follows. In Section \ref{sec_graphical} we recall the original description \cite{CKhopf} of the Connes-Kreimer Hopf algebra in terms of graphs and describe how, by virtue of a Milnor-Moore type theorem, this Hopf algebra may be described in terms of a pre-Lie structure on connected graphs in which one graph is inserted into another. In Section \ref{sec_nongraphical} we proceed to give a nondiagrammatic description of this Hopf algebra. Here the invariant theory of the orthogonal groups plays a crucial role in allowing us to pass from one setting to another. It is in this section that we formulate and prove our main theorem describing the insertion pre-Lie structure of Connes and Kreimer in terms of the canonical pre-Lie structure on polynomial vector fields.

\subsection{Notation and conventions} \label{sec_notcon}

A vector space $V$ is \emph{profinite} if it is an inverse limit
\[ V=\invlim{\alpha\in\mathcal{I}}{V_\alpha} \]
of finite-dimensional vector spaces $V_\alpha$. The presentation of $V$ as an inverse limit induces the inverse limit topology upon $V$. Profinite vector spaces form a category $\mathcal{P}\mathrm{Vect}$ in which the morphisms are \emph{continuous} linear maps. This category is anti-equivalent to the usual category of all vector spaces under the functors
\begin{equation} \label{eqn_cateqv}
\begin{array}{ccc}
\mathrm{Vect} & \rightleftharpoons & \mathcal{P}\mathrm{Vect} \\
V & \rightharpoonup & V^* \\
W^{\dag} & \leftharpoondown & W
\end{array}
\end{equation}
where $V^*$ denotes the linear dual of $V$ and $W^{\dag}$ denotes the \emph{continuous} linear dual of $W$.

If $V=\invlim{\alpha\in\mathcal{I}}{V_\alpha}$ and $U=\invlim{\beta\in\mathcal{J}}{U_\beta}$ are two profinite vector spaces, we define their \emph{completed tensor product} $\cotimes$ by
\[ V\cotimes U:=\invlim{\alpha,\beta\in\mathcal{I}\times\mathcal{J}} V_\alpha\otimes U_\beta. \]
This construction is functorial in $V$ and $U$. There is a canonical map
\begin{equation} \label{eqn_tensorinc}
V\otimes U \hookrightarrow V\cotimes U
\end{equation}
defined by the universal property of inverse limits, whose image is dense in $V\cotimes U$.

The equivalence \eqref{eqn_cateqv} of categories identifies the tensor product $\otimes$ in $\mathrm{Vect}$ with the completed tensor product $\cotimes$ in $\mathcal{P}\mathrm{Vect}$ and vice versa. Hence a monoid in one category is the same thing as a comonoid in the other category. We use this perspective tacitly throughout the paper. In particular, if $X$ is a Hopf algebra in the usual sense, then its dual $X^*$ is a Hopf algebra in the category of profinite vector spaces. Conversely, if $Y$ is a Hopf algebra in the category of profinite vector spaces, then its continuous dual $Y^{\dag}$ is a Hopf algebra in the usual sense. Furthermore, one may show that just as $\ast$ turns direct sums into direct products, $\dag$ turns direct products into direct sums. Similarly, just as $\otimes$ is distributive with respect to direct sums, the completed tensor product $\cotimes$ is distributive with respect to direct products.

Given a profinite vector space $V$, we define the completed tensor algebra of $V$ by
\[ \widehat{T}(V):=\prod_{n=0}^\infty V^{\cotimes n}. \]
Likewise, we define the completed symmetric algebra of $V$ by
\[ \widehat{S}(V):=\prod_{n=0}^\infty V^{\cotimes n}/\mathbb{S}_n, \]
where the $\mathbb{S}_n$ action is the unique action that continuously extends the usual action of $\mathbb{S}_n$ on $V^{\otimes n}$. Just as the usual tensor and symmetric algebras have the structure of Hopf algebras, the completed tensor and symmetric algebras have the structure of cocommutative Hopf algebras in the category of profinite vector spaces; that is to say we have maps
\begin{displaymath}
\begin{split}
\hat{\mu}: & \widehat{T}(V)\cotimes\widehat{T}(V)\to\widehat{T}(V) \\
\hat{\nabla}: & \widehat{T}(V)\to\widehat{T}(V)\cotimes\widehat{T}(V) \\
\hat{S}: & \widehat{T}(V)\to\widehat{T}(V)
\end{split}
\end{displaymath}
which are the unique continuous extensions of the usual maps that make $T(V)$ into a Hopf algebra, and likewise for the symmetric algebra $\widehat{S}(V)$.

Suppose that we have the structure of a Lie algebra
\[ [-,-]:\mathfrak{g}\cotimes\mathfrak{g}\to\mathfrak{g} \]
on a profinite vector space $\mathfrak{g}$. We define the completed universal enveloping algebra $\widehat{U}(\mathfrak{g})$ of $\mathfrak{g}$ as the quotient of the tensor algebra $\widehat{T}(\mathfrak{g})$ by the \emph{closure} of the subspace generated by the usual relations defining $U(\mathfrak{g})$. It is straightforward to check that the Hopf algebra structure defined above on $\widehat{T}(\mathfrak{g})$ descends to the completed universal enveloping algebra $\widehat{U}(\mathfrak{g})$.

Throughout the paper we work over a ground field $\gf$ of characteristic zero. For every positive integer $n$, there is a canonical vector space $V_n:=\gf^n$ of dimension $n$ equipped with a nondegenerate symmetric bilinear form. If $x_1,\ldots,x_n$ is the standard basis of $V_n$, then
\begin{equation} \label{eqn_canonicalform}
\langle x_i,x_j \rangle := \left\{\begin{array}{ll} 1, & i=j \\ 0, & i\neq j \end{array}\right\}.
\end{equation}
If $V$ is a vector space with a symmetric nondegenerate bilinear form $\innprod$, then we denote the group of endomorphisms $\phi$ of $V$ satisfying
\[ \langle \phi(v),\phi(w) \rangle = \langle v,w \rangle, \text{ for all }v,w \in V \]
by $O(V)$. Such transformations will be referred to as \emph{orthogonal} transformations. In particular, we will denote $O(V_n)$ by $O(n)$.

Given a symmetric bilinear form $\innprod$ on a vector space $V$, the inverse form $\innprod^{-1}$ on $V^*$ is defined by the following commutative diagram;
\[ \xymatrix{ & \gf \\ V\otimes V \ar[ur]^{\innprod} \ar[rr]^{D \otimes D} && V^*\otimes V^* \ar[ul]_{\innprod^{-1}}} \]
where $D(v):=[x\mapsto\langle v,x \rangle]$.

\section{The Connes-Kreimer Hopf algebra and its graphical description} \label{sec_graphical}

In this section we recall the graphical description of the Connes-Kreimer Hopf algebra as it was outlined in \cite{CKhopf}. We begin by introducing the elementary objects, namely graphs, that it is constructed from and describe some elementary operations on them. We then proceed to a description of the Connes-Kreimer Hopf algebra and its primitive elements. After formulating a Milnor-Moore type theorem for this Hopf algebra, we recall the fundamental insertion pre-Lie structure on graphs defined in \cite{CKhopf} and \cite{CKelim} that describes the Lie algebra structure of its primitive elements.

\subsection{Graphs}

We start by introducing elementary definitions of the notion of graph and subgraph and describe how to collapse a subgraph of a graph to a point.

\begin{defi} \label{def_graph}
A graph $\Gamma$ is a set $\Gamma$ consisting of the \emph{half-edges} of the graph together with the data of:
\begin{enumerate}
\item \label{item_edges}
A partition $E(\Gamma)$ of $\Gamma$ into pairs, called the set of \emph{edges} of $\Gamma$.
\item
A partition $V(\Gamma)$ of $\Gamma$, called the set of \emph{vertices} of $\Gamma$. The cardinality of a vertex $v\in V(\Gamma)$ is called its \emph{valency}. Vertices may have any valency.
\item \label{item_external}
A subset $\vext{\Gamma}$ of the 1-valent vertices of $\Gamma$, called the \emph{external vertices}.
\end{enumerate}

\begin{figure}[htp]
\centering
\includegraphics{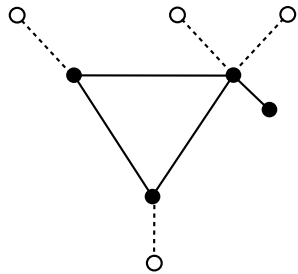}
\caption{Here, the external vertices are denoted by white dots and the external edges are denoted by dashed lines. Internal vertices are denoted by solid dots and internal edges are denoted by solid lines.}
\end{figure}

Any vertex which is not an external vertex is called an \emph{internal vertex}. The set of internal vertices of $\Gamma$ will be denoted by $\vint{\Gamma}$. An edge will be called \emph{external} if it is connected to an external vertex; otherwise, it will be called \emph{internal}. The set of external edges will be denoted by $\eext{\Gamma}$ and the set of internal edges will be denoted by $\eint{\Gamma}$. The empty graph is considered to be a graph and denoted by $\mathbf{1}$. We say that two graphs are isomorphic if there is a bijective mapping of the half-edges from one to the other preserving structures \eqref{item_edges} to \eqref{item_external} above.
\end{defi}

We may contract any internal edge of a graph $\Gamma$.

\begin{defi}
Let $e=\{h_1,h_2\}$ be an internal edge of a graph $\Gamma$. We define $\Gamma/e$ to be the result of contracting the edge $e$ in $\Gamma$. Specifically:
\begin{enumerate}
\item
\[ E(\Gamma/e)=E(\Gamma)-\{e\}. \]
\item
\begin{enumerate}
\item
If $e$ is not a loop and hence is incident to two vertices
\[v_1=\{h_1,x_1,\ldots,x_k\} \quad\text{and}\quad v_2=\{h_2,y_1,\ldots,y_l\}\]
then the vertices of $\Gamma/e$ consist of all the other vertices of $\Gamma$ together with a new vertex
\[ v=\{x_1,\ldots,x_k,y_1,\ldots,y_l\} \]
formed by combining $v_1$ and $v_2$.
\item
If $e$ is a loop incident to a vertex
\[ v=\{h_1,h_2,x_1,\ldots,x_k\} \]
then the vertices of $\Gamma/e$ consist of all the other vertices of $\Gamma$ together with the new vertex
\[ v'=\{x_1,\ldots,x_k\} \]
formed by removing the loop $e$.
\end{enumerate}
\item
The external vertices of $\Gamma/e$ are the same as those of $\Gamma$.
\end{enumerate}
\end{defi}

\begin{figure}[htp]
\centering
\includegraphics{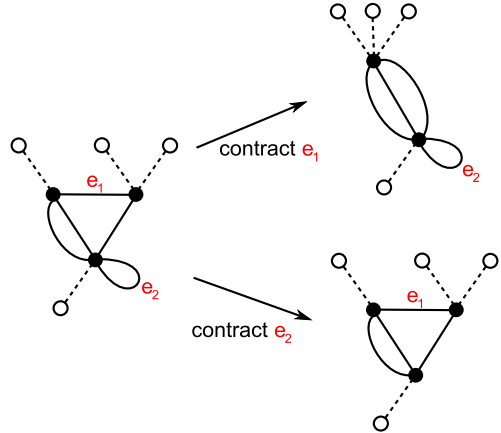}
\caption{Contracting edges and loops in a graph.}
\end{figure}

\begin{rem} \label{rem_contract}
Note that one may check that it does not matter what order one contracts edges in a graph, that is to say that
\[ (\Gamma/e_1)/e_2 = (\Gamma/e_2)/e_1 \]
for any two internal edges $e_1$ and $e_2$.
\end{rem}

\begin{defi}
A \emph{subgraph} $\gamma$ of a graph $\Gamma$ is a subset of the internal edges $\eint{\Gamma}$ of $\Gamma$. By Remark \ref{rem_contract}, we may associate to any subgraph $\gamma=\{e_1,\ldots,e_n\}$ of $\Gamma$ the graph
\[ \Gamma/\gamma:=(\ldots((\Gamma/e_1)/e_2)\ldots/e_n)  \]
with all the edges of $\gamma$ contracted.

To any subgraph $\gamma=\{e_1,\ldots,e_n\}$ of $\Gamma$, we may associate a graph in the sense of Definition \ref{def_graph}, which, by an abuse of notation, we shall also denote by $\gamma$. It is constructed as follows. Let
\[ \{v\in V(\Gamma): v\cap\left(\cup_{i=1}^n e_i\right) \neq \phi\}=\{v_1,\ldots,v_r\} \]
be a list of those vertices which intersect the subgraph $\gamma$ and consider the set
\[ H=\bigcup_{i=1}^r v_i \]
of half-edges of $\Gamma$ formed by their union. Let $h_1,\ldots,h_k$ be a list of those half-edges in $H$ which do not belong to any edge $e_i$ of $\gamma$;
\[ H-\bigcup_{i=1}^n e_i = \{h_1,\ldots,h_k\}. \]
These half-edges will form the ends of the external edges of $\gamma$. To this end we introduce another disjoint set of half-edges $h'_1,\ldots,h'_k$ to form the other ends of these external edges. The graph $\gamma$ may now be defined as follows:

\begin{figure}[htp]
\centering
\includegraphics{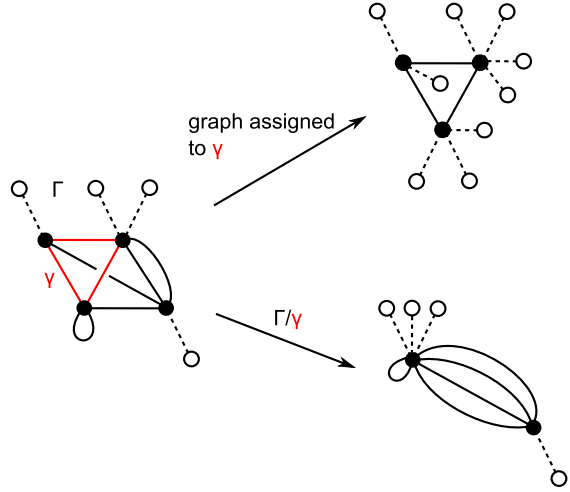}
\caption{Constructions associated to subgraphs.}
\end{figure}

\begin{enumerate}
\item
The half-edges of $\gamma$ consist of $H\cup\{h'_1,\ldots,h'_k\}$.
\item
The edges of $\gamma$ are
\[ E(\gamma)=\{e_1,\ldots,e_n,\{h_1,h'_1\},\ldots\{h_k,h'_k\}\}. \]
\item
The vertices of $\gamma$ are
\[ V(\gamma)= \{v_1,\ldots,v_r,\{h'_1\},\ldots,\{h'_k\}\}. \]
The external vertices of $\gamma$ are $\vext{\gamma}=\{\{h'_1\},\ldots,\{h'_k\}\}$.
\end{enumerate}
\end{defi}

\subsection{The Connes-Kreimer Hopf algebra} \label{sec_ckhopf}

Now that we have introduced some of the elementary objects, we are in a position to begin recalling the definition of the Connes-Kreimer Hopf algebra. We begin by describing a simple commutative algebra structure on graphs.

\begin{defi}
Let $\mathcal{H}$ be the vector space freely generated by isomorphism classes of graphs. Given two graphs $\Gamma_1$ and $\Gamma_2$ we may form their disjoint union $\Gamma_1\sqcup\Gamma_2$. This gives $\mathcal{H}$ the structure of an algebra, with multiplication $\mu$ given by disjoint union. We will denote the subspace of $\mathcal{H}$ generated by \emph{connected} graphs by $\mathcal{H}_c$. Note that the empty graph is not considered to be connected. The spaces $\mathcal{H}$ and $\mathcal{H}_c$ posses natural gradings
\[ \mathcal{H}=\bigoplus_{n=0}^\infty \mathcal{H}_n \qquad \mathcal{H}_c=\bigoplus_{n=0}^\infty \mathcal{H}_{c,n} \]
where $\mathcal{H}_n$ and $\mathcal{H}_{c,n}$ are those subspaces of $\mathcal{H}$ and $\mathcal{H}_c$ respectively which are generated by graphs with precisely $n$ edges. These gradings are of finite-type in that each of the subspaces $\mathcal{H}_n$ and $\mathcal{H}_{c,n}$ are finite-dimensional.

Occasionally, we will wish to use a finer grading than the one above. We may write
\begin{equation} \label{eqn_graphgrade}
\mathcal{H}=\bigoplus_{n,m,k=0}^\infty \mathcal{H}_{n,m,k} \qquad \mathcal{H}_c=\bigoplus_{n,m,k=0}^\infty \mathcal{H}_{c;n,m,k}
\end{equation}
where both $\mathcal{H}_{n,m,k}$ and $\mathcal{H}_{c;n,m,k}$ are spanned by graphs with precisely $n$ edges, $m$ internal edges and $k$ external vertices. The subscript $\geq a$ will be used to denote the subspace spanned by elements of degree greater than or equal to $a$, for instance
\[ \mathcal{H}_{\bullet,\geq a,\bullet}:= \bigoplus_{\begin{subarray}{c} n,k=0 \\ m=a \end{subarray}}^\infty \mathcal{H}_{n,m,k} \]
will denote the subspace of $\mathcal{H}$ spanned by graphs with at least $a$ internal edges.
\end{defi}

\begin{prop} \label{prop_polygraph}
$\mathcal{H}$ is a polynomial algebra over the subspace of connected graphs, that is to say we have the following canonical isomorphism of algebras;
\begin{displaymath}
\begin{array}{ccc}
S(\mathcal{H}_c) & \cong & \mathcal{H} \\
\Gamma_1\otimes\cdots\otimes\Gamma_k & \mapsto & \Gamma_1\sqcup\cdots\sqcup\Gamma_k
\end{array}
\end{displaymath}
\end{prop}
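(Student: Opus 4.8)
The plan is to obtain the comparison map $S(\mathcal{H}_c)\to\mathcal{H}$ for free from the universal property of the symmetric algebra, and then check that it is bijective by matching up the evident bases on the two sides, the dictionary being the decomposition of a graph into its connected components.

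First I would note that $(\mathcal{H},\mu)$ is a commutative, associative, unital algebra — the commutativity, associativity and unit property of disjoint union, with $\mathbf{1}$ (the empty graph) as unit, being immediate — so the linear inclusion $\mathcal{H}_c\hookrightarrow\mathcal{H}$ extends, by the universal property of $S(\mathcal{H}_c)$, to a unique algebra homomorphism $\Phi\colon S(\mathcal{H}_c)\to\mathcal{H}$. On generators $\Phi(\Gamma_1\cdots\Gamma_k)=\Gamma_1\sqcup\cdots\sqcup\Gamma_k$ for connected $\Gamma_i$ (this is well defined precisely because $\sqcup$ is symmetric), and $\Phi(1)=\mathbf{1}$; this is exactly the asserted map.

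Next I would identify bases. With $\mathcal{C}$ the set of isomorphism classes of connected graphs — a basis of $\mathcal{H}_c$ — a basis of $S(\mathcal{H}_c)$ is given by the unordered monomials $\Gamma_1\cdots\Gamma_k$ with $\Gamma_i\in\mathcal{C}$, i.e. by the finite multisets over $\mathcal{C}$ (the empty one being $1$). A basis of $\mathcal{H}$ is the set of isomorphism classes of all graphs. The link is the connected-components decomposition: define the components of a graph $\Gamma$ via the equivalence relation on its half-edges generated by ``belong to a common vertex'' and ``belong to a common edge''; each equivalence class, equipped with the induced edges, vertices and external vertices, is a connected graph, $\Gamma$ is the disjoint union of its components, and $\mathbf{1}$ has no components. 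The multiset of isomorphism classes of the components is a complete isomorphism invariant of $\Gamma$: an isomorphism must carry components to components, and conversely a matching family of component isomorphisms assembles into an isomorphism of the disjoint unions. Hence $\Gamma\mapsto\{\text{components of }\Gamma\}$ is a bijection from the basis of $\mathcal{H}$ onto the basis of $S(\mathcal{H}_c)$.

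Finally, $\Phi$ sends the monomial $\Gamma_1\cdots\Gamma_r$ (with $\Gamma_i\in\mathcal{C}$) to $\Gamma_1\sqcup\cdots\sqcup\Gamma_r$, whose components are precisely $\Gamma_1,\ldots,\Gamma_r$; thus $\Phi$ is exactly the inverse of the component bijection on basis elements. Therefore $\Phi$ carries one basis onto the other, so it is a linear isomorphism, and being also an algebra homomorphism it is the desired isomorphism of algebras. (Equivalently, one may package this as an explicit two-sided inverse $\Psi\colon\mathcal{H}\to S(\mathcal{H}_c)$ sending a graph to the monomial in its components, checking directly that $\Psi$ is multiplicative because the components of $\Gamma_1\sqcup\Gamma_2$ are those of $\Gamma_1$ together with those of $\Gamma_2$.) The one point needing genuine care is the bookkeeping inside the half-edge formalism of Definition \ref{def_graph}: verifying that ``connected component'' is defined so that external vertices and edges are apportioned correctly, that the empty graph indeed matches the unit $1\in S(\mathcal{H}_c)$, and that the multiset of components really is a complete isomorphism invariant; the remainder is the standard identification of $S(V)$ with the polynomial algebra on a basis of $V$.
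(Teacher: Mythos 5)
Your argument is correct and is essentially the paper's own proof, which simply notes that every graph is the disjoint union of its connected components; you have merely spelled out the base-matching and the universal-property formalities that the paper leaves implicit. No changes needed.
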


\begin{proof}
This follows from the fact that any graph is the disjoint union of its connected components.
\end{proof}

\begin{rem} \label{rem_connprim}
Since $S(\mathcal{H}_c)$ has a canonical cocommutative comultiplication on it in which the subspace $\mathcal{H}_c$ coincides with the space of primitive elements, the above theorem also endows $\mathcal{H}$ with a corresponding cocommutative comultiplication. This comultiplication differs from the noncocommutative comultiplication of Connes and Kreimer that we will introduce later in Definition \ref{def_ckcoprod}.
\end{rem}

The Connes-Kreimer Hopf algebra will be constructed as the dual of $\mathcal{H}$. To assist us in describing it, we introduce a nondegenerate bilinear form on graphs. Since $\mathcal{H}$ has a preferred basis consisting of isomorphism classes of graphs, this allows us to define a symmetric bilinear form on $\mathcal{H}$ as follows.

\begin{defi} \label{def_graphform}
Given two graphs $\Gamma_1$ and $\Gamma_2$ we define
\begin{equation} \label{eqn_graphform}
\langle\Gamma_1,\Gamma_2\rangle = \left\{ \begin{array}{rl} 1, & \Gamma_1\cong\Gamma_2 \\ 0, & \Gamma_1\ncong\Gamma_2\end{array} \right\}.
\end{equation}
This defines a symmetric bilinear form on $\mathcal{H}$ which is nondegenerate in the sense that it is nondegenerate on each $\mathcal{H}_n$. Obviously the pairing is trivial in the event that $\Gamma_1$ and $\Gamma_2$ have a different number of edges so that $\mathcal{H}_i$ and $\mathcal{H}_j$ are orthogonal if $i\neq j$.
\end{defi}

\begin{defi}
Let us denote by $\widehat{\mathcal{H}}$ the vector space freely generated by \emph{formal} infinite linear combinations of isomorphism classes of graphs. This space has the same natural grading as $\mathcal{H}$;
\[ \widehat{\mathcal{H}} = \prod_{n=0}^\infty \mathcal{H}_n. \]
\end{defi}

As before, we have the identification $\widehat{\mathcal{H}}\cong\widehat{S}(\widehat{\mathcal{H}}_c)$ of Proposition \ref{prop_polygraph}. The bilinear form \eqref{eqn_graphform} defined in Definition \ref{def_graphform} yields the following simple proposition describing this space as the space that is dual to $\mathcal{H}$.

\begin{prop} \label{prop_dualiso}
There is a canonical isomorphism between $\widehat{\mathcal{H}}$ and $\mathcal{H}^*$ given by
\begin{equation} \label{eqn_dualiso}
\begin{array}{rcl}
\widehat{\mathcal{H}}=\prod_{n=0}^\infty\mathcal{H}_n & \cong & \prod_{n=0}^\infty\mathcal{H}_n^*=\mathcal{H}^* \\
\Gamma & \mapsto & |\Aut(\Gamma)|\langle\Gamma,-\rangle
\end{array}
\end{equation}
\end{prop}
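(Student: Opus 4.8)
The plan is to reduce the statement to a family of elementary isomorphisms between the finite-dimensional graded pieces and then assemble them. Since the grading $\mathcal{H}=\bigoplus_{n=0}^\infty\mathcal{H}_n$ is of finite type, dualizing turns the direct sum into a direct product, so that $\mathcal{H}^*=\prod_{n=0}^\infty\mathcal{H}_n^*$; this matches the definition $\widehat{\mathcal{H}}=\prod_{n=0}^\infty\mathcal{H}_n$ exactly at the level of the underlying profinite vector spaces. It therefore suffices to produce, for each $n$, an isomorphism $\mathcal{H}_n\to\mathcal{H}_n^*$ sending a graph $\Gamma$ with $n$ edges to $|\Aut(\Gamma)|\langle\Gamma,-\rangle$, and then to check that these assemble into a single map $\prod_n\mathcal{H}_n\to\prod_n\mathcal{H}_n^*$ that is a morphism in $\mathcal{P}\mathrm{Vect}$.

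For fixed $n$ I would argue as follows. By Definition \ref{def_graphform} the bilinear form $\langle-,-\rangle$ restricts to a nondegenerate symmetric form on the finite-dimensional space $\mathcal{H}_n$; more precisely, the preferred basis of $\mathcal{H}_n$ given by isomorphism classes of graphs with $n$ edges is orthonormal for this form, by \eqref{eqn_graphform}. Hence the map $D_n:\mathcal{H}_n\to\mathcal{H}_n^*$, $v\mapsto\langle v,-\rangle$ (the restriction of the map $D$ from Section \ref{sec_notcon}), carries the preferred basis of $\mathcal{H}_n$ onto the dual basis of $\mathcal{H}_n^*$, and is therefore an isomorphism. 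Next, define a linear automorphism $\rho_n$ of $\mathcal{H}_n$ by $\rho_n(\Gamma)=|\Aut(\Gamma)|\,\Gamma$ on the preferred basis. Because a graph with $n$ edges has only $2n$ half-edges, $\Aut(\Gamma)$ is a finite group and $|\Aut(\Gamma)|$ is a positive integer, hence invertible in $\gf$ since $\operatorname{char}\gf=0$; thus $\rho_n$ is invertible. The composite $D_n\circ\rho_n$ is then an isomorphism $\mathcal{H}_n\to\mathcal{H}_n^*$ sending $\Gamma$ to $|\Aut(\Gamma)|\langle\Gamma,-\rangle$.

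Finally I would assemble these pieces. By Definition \ref{def_graphform} the subspaces $\mathcal{H}_i$ and $\mathcal{H}_j$ are orthogonal for $i\neq j$, so the functional $|\Aut(\Gamma)|\langle\Gamma,-\rangle$ attached to a graph $\Gamma$ with $n$ edges annihilates $\mathcal{H}_j$ for every $j\neq n$; consequently the maps $D_n\circ\rho_n$ are exactly the components of a well-defined linear map $\prod_n\mathcal{H}_n\to\prod_n\mathcal{H}_n^*$. This map is an isomorphism because each component is, and it is continuous — hence a morphism of profinite vector spaces — since it respects the grading, i.e. it is the inverse limit of the induced maps on the finite-dimensional truncations $\bigoplus_{n\le N}\mathcal{H}_n$. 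Its "canonicity" is then clear: both the form \eqref{eqn_graphform} and the normalization factors $|\Aut(\Gamma)|$ depend only on the preferred basis of isomorphism classes of graphs and involve no further choices.

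I do not anticipate a genuine obstacle; the proposition is in essence a bookkeeping statement dualizing a finite-type grading. The only points deserving a word of care are that each $\mathcal{H}_n$ is finite-dimensional — so that the automorphism groups are finite, the normalization factors are invertible, and $*$ interchanges direct sums with direct products — which is the finite-type property already recorded in the definition, and that the assembled map lands in $\mathcal{P}\mathrm{Vect}$, which holds because it is graded and therefore continuous for the inverse-limit topology.
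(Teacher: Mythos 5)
Your argument is correct, and it is precisely the bookkeeping the paper leaves implicit (the proposition is stated there with no proof): the form \eqref{eqn_graphform} makes the basis of isomorphism classes in each finite-dimensional $\mathcal{H}_n$ orthonormal, the factors $|\Aut(\Gamma)|$ are invertible in characteristic zero, and the graded components assemble into a continuous isomorphism $\prod_n\mathcal{H}_n\to\prod_n\mathcal{H}_n^*$. Nothing further is needed.
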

\noproof

Now we can recall how the Connes-Kreimer coproduct on graphs is defined. This coproduct is the fundamental algebraic structure that encodes the graphical combinatorics of the BPHZ algorithm \cite{CKhopf}.

\begin{defi} \label{def_ckcoprod}
By Proposition \ref{prop_polygraph}, in order to define a bialgebra structure on $\mathcal{H}$, it is sufficient to specify how the comultiplication
\[ \nabla:\mathcal{H}\to\mathcal{H}\otimes\mathcal{H} \]
behaves on \emph{connected} graphs. This is defined by Connes-Kreimer \cite{CKhopf} using the following formula;
\begin{equation} \label{eqn_ckcoprod}
\nabla(\Gamma):= \mathbf{1}\otimes\Gamma + \Gamma\otimes\mathbf{1} + \sum_{\gamma\subsetneq\Gamma} \gamma\otimes\Gamma/\gamma
\end{equation}
where the sum is taken over all nontrivial subgraphs of $\Gamma$.
\end{defi}

\begin{rem} \label{rem_cograding}
Note that if $\Gamma\in\mathcal{H}_n$, then $\nabla(\Gamma)$ will be an inhomogeneous sum of elements of total degree $k$ for $k$ between $n$ and $3n$. However, if we consider the grading on $\mathcal{H}$ defined by the number of \emph{internal} edges of a graph, then $\nabla$ will have degree zero in this grading. These facts may be summarized by the identity
\[ \nabla(\mathcal{H}_{n,m,\bullet})\subset\bigoplus_{k=n}^{3n}\left(\bigoplus_{\begin{subarray}{c} i_1+j_1=k \\ i_2+j_2=m \end{subarray}} \mathcal{H}_{i_1,i_2,\bullet}\otimes\mathcal{H}_{j_1,j_2,\bullet}\right). \]
\end{rem}

Theorem 1 of \cite{CKhopf} due to Connes-Kreimer states that this comultiplication is coassociative and gives $\mathcal{H}$ the structure of a commutative Hopf algebra. The Connes-Kreimer Hopf algebra is then given by taking the linear dual of this Hopf algebra. The bilinear form introduced in Definition \ref{def_graphform} allows us to describe this dual Hopf algebra in terms of algebraic structures defined on $\widehat{\mathcal{H}}$.

\begin{defi}
The vector space $\widehat{\mathcal{H}}$ may be endowed with the structure of a cocommutative Hopf algebra (in the category $\mathcal{P}\mathrm{Vect}$ of profinite vector spaces) in the following manner. Let $I:\widehat{\mathcal{H}}\to\mathcal{H}^*$ be the map given by \eqref{eqn_dualiso}. The multiplication and comultiplication
\[ \star:\widehat{\mathcal{H}}\cotimes\widehat{\mathcal{H}}\to\widehat{\mathcal{H}} \quad\text{and}\quad \Delta:\widehat{\mathcal{H}}\to\widehat{\mathcal{H}}\cotimes\widehat{\mathcal{H}} \]
are defined by the formulae:
\begin{displaymath}
\begin{split}
I\circ\star & = \nabla^*\circ(I\cotimes I) \\
\mu^*\circ I & = (I\cotimes I)\circ\Delta
\end{split}
\end{displaymath}
\end{defi}

\begin{rem} \label{rem_grading}
It follows from Remark \ref{rem_cograding} that $\star$ has degree zero in the grading by the number of \emph{internal} edges. Additionally, if $\Gamma_1$ is a formal sum of elements from $\mathcal{H}_k$ for $k\geq 3n$ then both $\Gamma_1\star\Gamma_2$ and $\Gamma_2\star\Gamma_1$ will be a formal sum of elements from $\mathcal{H}_k$ for $k\geq n$. We may summarize this as
\begin{equation} \label{eqn_grading}
\begin{split}
\star\left([\mathcal{H}_{\geq 3n,m_1,\bullet}] , [\mathcal{H}_{\bullet,m_2,\bullet}]\right) &\subset \mathcal{H}_{\geq n,m_1+m_2,\bullet} \\
\star\left([\mathcal{H}_{\bullet,m_1,\bullet}] , [\mathcal{H}_{\geq 3n,m_2,\bullet}]\right) &\subset \mathcal{H}_{\geq n,m_1+m_2,\bullet}
\end{split}
\end{equation}
\end{rem}

We begin by determining a more convenient description of the cocommutative comultiplication $\Delta$. Consider the isomorphism $T:S(\mathcal{H}_c)\to\mathcal{H}$ of Proposition \ref{prop_polygraph}. This induces an isomorphism
\[ T^*:\mathcal{H}^*\to\prod_{n=0}^\infty\left[(\mathcal{H}_c^*)^{\cotimes n}\right]^{\mathbb{S}_n}. \]
The latter space of invariants may be identified with the coinvariants $\widehat{S}(\mathcal{H}_c^*)=\prod_{n=0}^\infty\left[(\mathcal{H}_c^*)^{\cotimes n}\right]_{\mathbb{S}_n}$ under the map
\begin{displaymath}
\begin{array}{rcl}
\omega_n: \left[(\mathcal{H}_c^*)^{\cotimes n}\right]^{\mathbb{S}_n} & \to & \left[(\mathcal{H}_c^*)^{\cotimes n}\right]_{\mathbb{S}_n} \\
f & \mapsto & \frac{1}{n!} f
\end{array}
\end{displaymath}

\begin{prop}
The following diagram of isomorphisms commutes
\begin{displaymath}
\xymatrix{ \widehat{\mathcal{H}} \ar[r]^I & \mathcal{H}^* \ar[d]^{\omega\circ T^*} \\ \widehat{S}(\widehat{\mathcal{H}}_c) \ar[r]^{S(I)} \ar[u]^{\widehat{T}} & \widehat{S}(\mathcal{H}_c^*) }
\end{displaymath}
\end{prop}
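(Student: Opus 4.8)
The plan is to verify commutativity of the square by unravelling all four maps on a connected graph, which suffices by multiplicativity since all maps in sight are algebra (or coalgebra) morphisms.  First I would recall the explicit description of each arrow.  The map $\widehat{T}:\widehat{S}(\widehat{\mathcal{H}}_c)\to\widehat{\mathcal{H}}$ is the continuous extension of the isomorphism $T$ of Proposition~\ref{prop_polygraph}, sending a product $\Gamma_1\otimes\cdots\otimes\Gamma_k$ of connected graphs to their disjoint union.  The map $I$ is given by \eqref{eqn_dualiso}, namely $I(\Gamma)=|\Aut(\Gamma)|\langle\Gamma,-\rangle$.  Its restriction to connected graphs is an isomorphism $\widehat{\mathcal{H}}_c\cong\mathcal{H}_c^*$, and $\widehat{S}(I)$ is the induced map on completed symmetric algebras.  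Finally $T^*$ is the transpose of $T$, and $\omega=\prod_n\omega_n$ rescales the degree-$n$ piece by $1/n!$.

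The key computation is then to chase a single connected graph $\Gamma$, or rather a disjoint union $\Gamma_1\sqcup\cdots\sqcup\Gamma_k$ with the $\Gamma_i$ connected, around both sides.  Going up-then-right-then-right from $\widehat{S}(\widehat{\mathcal{H}}_c)$: the element $\Gamma_1\otimes\cdots\otimes\Gamma_k$ maps under $\widehat{T}$ to $\Gamma_1\sqcup\cdots\sqcup\Gamma_k$, and then under $I$ to $|\Aut(\Gamma_1\sqcup\cdots\sqcup\Gamma_k)|\,\langle\Gamma_1\sqcup\cdots\sqcup\Gamma_k,-\rangle$.  Going right-then-right-then-down, one first applies $\widehat{S}(I)$, which because $I$ on connected graphs carries the factor $|\Aut(\Gamma_i)|$ produces $\prod_i|\Aut(\Gamma_i)|$ times a corresponding element of $[(\mathcal{H}_c^*)^{\cotimes k}]^{\mathbb{S}_k}$; then $\omega_k$ divides by $k!$; and one must then recognise this as $(\omega\circ T^*\circ I\circ\widehat{T})$ applied to the original element.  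The identity therefore reduces to the combinatorial fact
\[
|\Aut(\Gamma_1\sqcup\cdots\sqcup\Gamma_k)| \;=\; \frac{1}{k!}\;\text{(correction for repeated factors)}\;\prod_{i=1}^k |\Aut(\Gamma_i)|,
\]
more precisely that the automorphism group of a disjoint union is the wreath-type product obtained from the automorphism groups of the components together with the permutations of isomorphic components, and that the pairing $\langle\Gamma_1\sqcup\cdots\sqcup\Gamma_k,-\rangle$ matches, under $T^*$ and the invariant-to-coinvariant identification, the symmetrisation of $\langle\Gamma_1,-\rangle\otimes\cdots\otimes\langle\Gamma_k,-\rangle$ with exactly the same multiplicities.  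The two sources of combinatorial factors — the $1/k!$ in $\omega_k$ versus the over-counting of isomorphic components, and the $|\Aut|$ of the union versus the product of the $|\Aut(\Gamma_i)|$ — are designed to cancel precisely, and checking that cancellation is the heart of the argument.

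I expect the main obstacle to be bookkeeping rather than conceptual: one must be careful about how the bilinear form \eqref{eqn_graphform}, which is $1$ on the preferred basis of \emph{isomorphism classes}, interacts with the map $T$ when several of the $\Gamma_i$ coincide, since then $\Gamma_1\otimes\cdots\otimes\Gamma_k$ is not a basis element of $S(\mathcal{H}_c)$ in the naive sense but represents an $\mathbb{S}_k$-orbit.  The cleanest way to handle this is to first treat the case of pairwise non-isomorphic $\Gamma_i$, where $|\Aut(\Gamma_1\sqcup\cdots\sqcup\Gamma_k)|=\prod_i|\Aut(\Gamma_i)|$ and the orbit has size $k!$, and then observe that the general case follows by the same argument because both the $|\Aut|$ factor and the orbit size are modified by exactly the same multinomial coefficient.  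Once the diagram commutes on these generators, continuity and the fact that $\widehat{\mathcal{H}}$ is the completion of the span of such elements (the image of \eqref{eqn_tensorinc} is dense) extend the identity to all of $\widehat{S}(\widehat{\mathcal{H}}_c)$, completing the proof.
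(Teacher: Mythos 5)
Your proposal is correct and follows essentially the same route as the paper: both reduce commutativity to the identity
$|\Aut(\Gamma_1\sqcup\cdots\sqcup\Gamma_k)|\langle\Gamma_1\sqcup\cdots\sqcup\Gamma_k,\gamma_1\sqcup\cdots\sqcup\gamma_k\rangle=\sum_{\sigma\in\mathbb{S}_k}\prod_i|\Aut(\Gamma_i)|\langle\Gamma_i,\gamma_{\sigma(i)}\rangle$
on disjoint unions of connected graphs, and both deduce it from the wreath-product decomposition of $\Aut$ of a disjoint union, with the $1/k!$ of $\omega_k$ cancelling against the permutations of isomorphic components. Your extra care about repeated components and the final density argument are consistent with, and no more than an expansion of, the paper's proof.
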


\begin{proof}
The commutativity of the above diagram is equivalent to the following identity
\begin{multline*}
|\Aut(\Gamma_1\sqcup\cdots\sqcup\Gamma_n)|\langle\Gamma_1\sqcup\cdots\sqcup\Gamma_n,\gamma_1\sqcup\cdots\sqcup\gamma_n\rangle = \\ \sum_{\sigma\in\mathbb{S}_n}|\Aut(\Gamma_1)|\cdots|\Aut(\Gamma_n)|\langle\Gamma_1,\gamma_{\sigma(1)}\rangle\cdots\langle\Gamma_n,\gamma_{\sigma(n)}\rangle.
\end{multline*}
which holds for all connected graphs $\Gamma_1,\ldots\Gamma_n,\gamma_1,\ldots,\gamma_n$. This identity follows in turn from the decomposition
\[ \Aut(\Gamma_{i_1}\sqcup\cdots\sqcup\Gamma_{i_n}) = [\Aut(\Gamma_{i_1})\times\cdots\times\Aut(\Gamma_{i_n})]\rtimes\{\sigma\in\mathbb{S}_n:i_r=i_{\sigma(r)}, \text{ for all }r\} \]
of $\Aut(\Gamma_{i_1}\sqcup\cdots\sqcup\Gamma_{i_n})$ as a wreath product; where $\Gamma_i,i\in\mathcal{I}$ is a complete list of representatives for isomorphism classes of connected graphs.
\end{proof}

\begin{rem}
This proposition affords us a more convenient description of the comultiplication $\Delta$ and consequently allows us to describe the primitive elements of the Hopf algebra $(\widehat{\mathcal{H}},\star,\Delta)$. It is well-known that the isomorphism $\omega\circ T^*$ maps the comultiplication $\mu^*$ on $\mathcal{H}^*$ to the canonical cocommutative comultiplication on $\widehat{S}(\mathcal{H}_c^*)$. Consequently, the isomorphism $\widehat{T}:\widehat{S}(\widehat{\mathcal{H}}_c)\to\widehat{\mathcal{H}}$ identifies the comultiplication $\Delta$ on $\widehat{\mathcal{H}}$ with the canonical cocommutative comultiplication on $\widehat{S}(\widehat{\mathcal{H}}_c)$. From this it follows that the subspace of primitive elements of $(\widehat{\mathcal{H}},\star,\Delta)$ coincides with $\widehat{\mathcal{H}}_c$.
\end{rem}

\subsection{A Milnor-Moore type theorem}

The Hopf algebra $(\widehat{\mathcal{H}},\star,\Delta)$ may be described in terms of the universal enveloping algebra of a certain Lie algebra by virtue of a Milnor-Moore type theorem. This will allow us to analyze the algebraic structure of this Hopf algebra by analyzing the underlying structure of this Lie algebra. Our first task is to pick out the relevant subalgebra of $(\widehat{\mathcal{H}},\star,\Delta)$.

The algebra $\widehat{\mathcal{H}}$ contains a commutative subalgebra of graphs with no internal edges, which plays no role in the renormalization process and contributes nothing to the algebraic structure. Therefore, we would like to separate this subalgebra out of the discussion. Let
\[ \widehat{\mathcal{H}}_c^+:=\widehat{\mathcal{H}}_{c;\bullet,\geq 1,\bullet} \]
denote the subspace of $\widehat{\mathcal{H}}_c$ which is formally spanned by those graphs with at least one internal edge. We shall define $\widehat{\mathcal{H}}^+\cong\widehat{S}(\widehat{\mathcal{H}}_c^+)$ to be the corresponding subspace of $\widehat{\mathcal{H}}$ which is formally generated by disjoint unions of connected graphs with at least one internal edge.

\begin{prop}
The bialgebra $(\widehat{\mathcal{H}},\star,\Delta)$ is the tensor product of a strictly commutative subalgebra $K$ and the subalgebra $\widehat{\mathcal{H}}^+$
\[ \widehat{\mathcal{H}} \cong K\cotimes\widehat{\mathcal{H}}^+. \]
\end{prop}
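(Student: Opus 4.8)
The plan is to establish the dual statement at the level of the Connes--Kreimer Hopf algebra $(\mathcal{H},\mu,\nabla)$ and then transport it through the anti-equivalence of categories of Section \ref{sec_notcon}. Splitting each connected graph according to whether or not it has internal edges, Proposition \ref{prop_polygraph} refines to an isomorphism of algebras
\[ \mathcal{H}\;\cong\;\mathcal{H}_0\otimes\mathcal{H}_+,\qquad \mathcal{H}_0:=S(\mathcal{H}_{c;\bullet,0,\bullet}),\quad \mathcal{H}_+:=S(\mathcal{H}_c^{+}), \]
where $x_0\otimes x_+$ is sent to the disjoint union $x_0\sqcup x_+$; this is an isomorphism because every graph is, in a unique way, the disjoint union of its connected components with no internal edges and its connected components with at least one internal edge. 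I would then show that the Connes--Kreimer coproduct $\nabla$ of Definition \ref{def_ckcoprod} respects this decomposition, so that the displayed map is an isomorphism of bialgebras, and finally dualize.

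The verification that $\nabla$ respects the decomposition breaks into three steps. First, $\nabla(\mathcal{H}_0)\subseteq\mathcal{H}_0\otimes\mathcal{H}_0$: a graph with no internal edges has no nontrivial subgraphs, so for connected such $\Gamma$ only the terms $\mathbf{1}\otimes\Gamma+\Gamma\otimes\mathbf{1}$ survive in \eqref{eqn_ckcoprod}, i.e.\ $\Gamma$ is primitive; since $\nabla$ is an algebra homomorphism for $\mu$ it therefore preserves $\mathcal{H}_0$, and moreover $\nabla|_{\mathcal{H}_0}$ is exactly the canonical cocommutative coproduct of the polynomial algebra $S(\mathcal{H}_{c;\bullet,0,\bullet})$. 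Second, $\nabla(\mathcal{H}_+)\subseteq\mathcal{H}_+\otimes\mathcal{H}_+$: for connected $\Gamma$ with at least one internal edge and a nontrivial subgraph $\gamma\subsetneq\Gamma$, every connected component of the graph $\gamma$ contains one of the original vertices $v_i$ of $\Gamma$ (each added external vertex $\{h'_j\}$ is joined to some $v_i$), and each such $v_i$ meets an edge of $\gamma$, which is internal in $\gamma$; likewise $\Gamma/\gamma$ is connected and retains at least one internal edge because $\gamma$ omits one. Hence each tensor factor appearing in \eqref{eqn_ckcoprod} lies in $\mathcal{H}_+$. Third, multiplicativity of $\nabla$ with respect to $\mu$ combined with these two inclusions shows that the algebra isomorphism $\mathcal{H}\cong\mathcal{H}_0\otimes\mathcal{H}_+$ intertwines $\nabla$ with the tensor-product coproduct $\nabla|_{\mathcal{H}_0}\otimes\nabla|_{\mathcal{H}_+}$, so it is an isomorphism of bialgebras.

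To conclude I would dualize. The anti-equivalence of Section \ref{sec_notcon} carries $\otimes$ to $\cotimes$ and bialgebras to bialgebras, so applying it yields an isomorphism of bialgebras $(\widehat{\mathcal{H}},\star,\Delta)=\mathcal{H}^{*}\cong\mathcal{H}_0^{*}\cotimes\mathcal{H}_+^{*}$. By Propositions \ref{prop_polygraph} and \ref{prop_dualiso} together with the identification of $\widehat{\mathcal{H}}$ with $\widehat{S}(\widehat{\mathcal{H}}_c)$, the two dual factors are $\mathcal{H}_+^{*}\cong\widehat{S}(\widehat{\mathcal{H}}_c^{+})=\widehat{\mathcal{H}}^{+}$ and $\mathcal{H}_0^{*}\cong\widehat{S}(\widehat{\mathcal{H}}_{c;\bullet,0,\bullet})=:K$, realized inside $\widehat{\mathcal{H}}$ exactly as the subspaces described before the statement; and since $\nabla|_{\mathcal{H}_0}$ was the cocommutative coproduct of a polynomial algebra, the induced $\star$-multiplication on $K$ is the disjoint-union multiplication, so $K$ is a polynomial and hence strictly commutative algebra. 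The compatibility of all these identifications with the grading by number of internal edges (Remarks \ref{rem_cograding} and \ref{rem_grading}) serves as a convenient consistency check. The main obstacle is the second step above: carefully confirming that neither passing to a subgraph nor contracting one can produce a connected component without internal edges, so that $\mathcal{H}_+$ is genuinely a sub-bialgebra under the Connes--Kreimer coproduct.
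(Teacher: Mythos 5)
Your proposal is correct and is essentially the paper's argument viewed from the predual side: the paper splits $\widehat{\mathcal{H}}_c$ into graphs with and without internal edges and rests on exactly the observation you verify in your second step, namely that no factor of any term of $\nabla(\Gamma)$ can lack internal edges unless $\Gamma$ itself does. Your componentwise check that both $\gamma$ and $\Gamma/\gamma$ lie in $\mathcal{H}_+$ fills in a detail the paper leaves implicit, but the decomposition, the key lemma, and the dualization are the same.
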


\begin{proof}
$\widehat{\mathcal{H}}_c$ splits as a direct sum of the subspace $I:=\widehat{\mathcal{H}}_{c;\bullet 0 \bullet}$ consisting of graphs which have no internal edges and the subspace $\widehat{\mathcal{H}}_c^+$. Hence
\[ \widehat{\mathcal{H}}\cong\widehat{S}(\widehat{\mathcal{H}}_c)\cong\widehat{S}(I)\cotimes\widehat{S}(\widehat{\mathcal{H}}_c^+)=\widehat{S}(I)\cotimes\widehat{\mathcal{H}}^+. \]

Set $K:=\widehat{S}(I)$. Since it is impossible for any graph corresponding to a term in the expression \eqref{eqn_ckcoprod} for the coproduct $\nabla(\Gamma)$ to have no internal edges, unless $\Gamma$ itself has no internal edges, it follows that $K$ is not just a subalgebra of $\widehat{\mathcal{H}}$, but in fact a commutative subalgebra whose multiplication coincides with disjoint union. Furthermore, this fact also implies not only that $\widehat{\mathcal{H}}^+$ is a subalgebra of $\widehat{\mathcal{H}}$, but that
\[ \Gamma_1\star\Gamma_2 = \Gamma_1\sqcup\Gamma_2 \]
for all $\Gamma_1\in\widehat{\mathcal{H}}^+$ and $\Gamma_2\in K$. Hence we have the following isomorphism;
\begin{displaymath}
\begin{array}{ccc}
K\cotimes\widehat{\mathcal{H}}^+ & \cong & \widehat{\mathcal{H}} \\
\Gamma_1\cotimes\Gamma_2 & \mapsto & \Gamma_1\star\Gamma_2
\end{array}
\end{displaymath}
\end{proof}

Since $\widehat{\mathcal{H}}^+\cong\widehat{S}(\widehat{\mathcal{H}}_c^+)$, this induces another grading on $\widehat{\mathcal{H}}^+$ by the \emph{order} of a polynomial in $\widehat{S}(\widehat{\mathcal{H}}_c^+)$; or equivalently, the \emph{number of connected components} of a graph $\Gamma\in\widehat{\mathcal{H}}^+$. One can check that by Equation \eqref{eqn_ckcoprod},
\begin{equation} \label{eqn_stardeform}
\Gamma_1\star\Gamma_2 = \Gamma_1\sqcup\Gamma_2 + \text{terms of order } < \mathrm{order}(\Gamma_1)+\mathrm{order}(\Gamma_2).
\end{equation}
One consequence of this equation is that $\widehat{\mathcal{H}}^+$ is (formally) generated by primitive elements under $\star$, and consequently a Milnor-Moore type theorem holds

\begin{theorem} \label{thm_milmor}
The bialgebra $(\widehat{\mathcal{H}}^+,\star,\Delta)$ is canonically isomorphic to the (completed) universal enveloping algebra of its primitive elements $\widehat{\mathcal{H}}_c^+$;\
\begin{equation} \label{eqn_envelopemap}
\begin{array}{ccc}
\widehat{U}(\widehat{\mathcal{H}}_c^+) & \cong & \widehat{\mathcal{H}}^+ \\
\Gamma_1\cotimes\cdots\cotimes\Gamma_n & \mapsto & \Gamma_1\star\cdots\star\Gamma_n
\end{array}
\end{equation}
\end{theorem}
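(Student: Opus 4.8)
The plan is to show that the canonical map in \eqref{eqn_envelopemap} is a morphism of bialgebras in $\mathcal{P}\mathrm{Vect}$ and then prove that it is an isomorphism by the standard filtered-deformation argument underlying the Milnor-Moore theorem, using the grading by number of internal edges from \eqref{eqn_graphgrade} to make that argument legitimate in the profinite setting.

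First I would construct the map and check that it is a bialgebra morphism. The space $\widehat{\mathcal{H}}_c^+$ of primitive elements of $(\widehat{\mathcal{H}}^+,\star,\Delta)$ is closed under the commutator bracket $[x,y]:=x\star y - y\star x$, and the inclusion $\widehat{\mathcal{H}}_c^+\hookrightarrow\widehat{\mathcal{H}}^+$ is a continuous morphism of Lie algebras in $\mathcal{P}\mathrm{Vect}$. By the universal property of the completed enveloping algebra it extends to a unique continuous algebra morphism $\phi:\widehat{U}(\widehat{\mathcal{H}}_c^+)\to\widehat{\mathcal{H}}^+$, and unwinding the definitions shows $\phi(\Gamma_1\cotimes\cdots\cotimes\Gamma_n)=\Gamma_1\star\cdots\star\Gamma_n$. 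Since $\phi$ sends the Lie generators of $\widehat{U}(\widehat{\mathcal{H}}_c^+)$, which are primitive, to primitive elements, and since the comultiplications on both sides are the unique continuous algebra maps taking the prescribed values on those generators, $\phi$ is automatically compatible with comultiplication, hence is a morphism of cocommutative bialgebras (and of Hopf algebras, both being conilpotent). It remains to prove that $\phi$ is bijective.

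Next I would localize the problem at a fixed internal-edge degree. By Remark \ref{rem_grading} the product $\star$, and therefore the commutator bracket on $\widehat{\mathcal{H}}_c^+$, has degree zero for the grading by number of internal edges; hence this grading passes to $\widehat{U}(\widehat{\mathcal{H}}_c^+)$ and to $\widehat{\mathcal{H}}^+$, and $\phi$ preserves it. Every connected graph in $\widehat{\mathcal{H}}_c^+$ has at least one internal edge, so in internal-edge degree $m$ a disjoint union of such graphs, and likewise a product of primitives in $\widehat{U}(\widehat{\mathcal{H}}_c^+)$, has at most $m$ tensor/commutative factors. Consequently the degree-$m$ part of each side is a profinite vector space carrying a bounded, exhaustive filtration: the Poincar\'e-Birkhoff-Witt filtration on $\widehat{U}(\widehat{\mathcal{H}}_c^+)$, and the filtration by number of connected components (equivalently, order in $\widehat{S}(\widehat{\mathcal{H}}_c^+)$) on $\widehat{\mathcal{H}}^+$. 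Both $\widehat{U}(\widehat{\mathcal{H}}_c^+)$ and $\widehat{\mathcal{H}}^+$ are the products of their degree-$m$ parts and $\phi$ respects this decomposition, so it suffices to show that $\phi$ is an isomorphism in each internal-edge degree $m$.

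Finally I would pass to associated gradeds. By \eqref{eqn_stardeform} the map $\phi$ is filtered for the two filtrations above, carrying the $n$-th piece into graphs with at most $n$ components. The associated graded of the Poincar\'e-Birkhoff-Witt filtration is $\widehat{S}(\widehat{\mathcal{H}}_c^+)$ by the completed Poincar\'e-Birkhoff-Witt theorem, while the associated graded of the order filtration is again $\widehat{S}(\widehat{\mathcal{H}}_c^+)$, exactly because \eqref{eqn_stardeform} identifies the top-order part of $\Gamma_1\star\cdots\star\Gamma_n$ with the commutative product $\Gamma_1\sqcup\cdots\sqcup\Gamma_n$. The induced map $\mathrm{gr}\,\phi$ is then a morphism of commutative algebras in $\mathcal{P}\mathrm{Vect}$ that is the identity on the degree-one part $\widehat{\mathcal{H}}_c^+$ (there $\phi$ is literally the inclusion), hence is the identity of $\widehat{S}(\widehat{\mathcal{H}}_c^+)$. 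Since $\mathrm{gr}\,\phi$ is an isomorphism and, in each internal-edge degree $m$, both filtrations are finite, a routine induction on the filtration degree shows that $\phi$ is an isomorphism in internal-edge degree $m$, and therefore an isomorphism overall; being a bialgebra morphism, it is the desired isomorphism \eqref{eqn_envelopemap}.

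I expect the main obstacle to be the profinite bookkeeping rather than any new idea: one must check that the completed enveloping algebra satisfies the right universal property and a completed Poincar\'e-Birkhoff-Witt theorem in $\mathcal{P}\mathrm{Vect}$, that the grading by internal edges and the filtration by number of components interact as stated (so that the filtrations are finite in each fixed internal-edge degree), and that the principle ``isomorphism on the associated graded implies isomorphism'' survives inverse limits, which it does precisely because of that finiteness. The essential algebraic input, the deformation identity \eqref{eqn_stardeform}, has already been extracted from the Connes-Kreimer coproduct \eqref{eqn_ckcoprod}, so no further graph combinatorics is needed.
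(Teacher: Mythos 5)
Your proposal is correct, but it reaches bijectivity by a genuinely different route than the paper. The paper proves surjectivity and injectivity by separate hands-on arguments: for surjectivity it decomposes $\Lambda$ by \emph{total} number of edges, inverts $\star$ component by component using \eqref{eqn_stardeform}, and then must verify convergence of the resulting formal preimage via the estimate of Remark \ref{rem_grading} and \eqref{eqn_grading} (this is where the $(3N)^N$ bound appears); for injectivity it reduces to a fixed internal-edge degree $n$, notes that the number of tensor factors is then bounded by $n$, and finishes by a standard induction. You instead localize at a fixed internal-edge degree from the outset, observe that both the PBW filtration and the number-of-components filtration become \emph{finite} there (again because every graph in $\widehat{\mathcal{H}}_c^+$ has at least one internal edge and $\star$ has internal-edge degree zero), and run the associated-graded/five-lemma argument, with \eqref{eqn_stardeform} identifying $\mathrm{gr}\,\phi$ with the identity of $\widehat{S}(\widehat{\mathcal{H}}_c^+)$. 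This buys you a unified treatment of surjectivity and injectivity and lets you bypass the total-edge convergence estimate entirely, since each internal-edge-homogeneous piece is handled by a finite induction in the abelian category $\mathcal{P}\mathrm{Vect}$; the cost is the profinite bookkeeping you flag (that $\widehat{U}$ and $\widehat{\mathcal{H}}^+$ are products of their internal-edge components, that $\cotimes$ distributes over these products, and a completed PBW statement). One small improvement: you do not actually need to \emph{assume} a completed PBW theorem, since the composite $\widehat{S}^j(\widehat{\mathcal{H}}_c^+)\twoheadrightarrow\mathrm{gr}_j\widehat{U}(\widehat{\mathcal{H}}_c^+)\to\mathrm{gr}_j\widehat{\mathcal{H}}^+\cong\widehat{S}^j(\widehat{\mathcal{H}}_c^+)$ being the identity forces both arrows to be isomorphisms, so PBW for this Lie algebra comes out as a byproduct rather than an input.
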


\begin{proof}
The proof involves standard arguments \cite{milnormoore}, but the situation is complicated by the fact that we are working with formal objects and the consequent need to ensure that certain expressions are convergent. Additionally, the situation is made more awkward by the fact that $\star$ is inhomogeneous in the finite-type grading by number of edges. The reader who feels no uneasiness about working with formal objects may benefit from skipping the details. We include them only for the sake of being thorough.

It follows from Remark \ref{rem_grading} and Equation \eqref{eqn_grading} that the map \eqref{eqn_envelopemap} is well-defined, in that those expressions which are implicit in its definition are convergent. More precisely, an element of $\widehat{U}(\widehat{\mathcal{H}}_c^+)$ is a formal sum of elements on the left-hand side of \eqref{eqn_envelopemap} and its image is consequently the corresponding formal sum of elements on the right-hand side of \eqref{eqn_envelopemap}, which converges in the inverse limit topology by \eqref{eqn_grading}. For this, it is important that we work with graphs with at least one internal edge.

We start by showing that \eqref{eqn_envelopemap} is surjective. Any $\Lambda\in\widehat{\mathcal{H}}^+$ will be a formal sum
\[ \Lambda=\sum_{k=1}^\infty\Lambda_k \]
of terms $\Lambda_k\in\mathcal{H}_k^+$ with $k$ edges. In turn, each $\Lambda_k$ will be a (finite) sum of terms of the form
\begin{equation} \label{eqn_conncomp}
\Gamma_1\sqcup\cdots\sqcup\Gamma_n
\end{equation}
for $n\leq k$, with each $\Gamma_i\in\mathcal{H}_c^+$. Consequently, we may write
\[ \Lambda_k=\sum_{n=1}^k\Lambda_k^n \]
where $\Lambda_k^n$ is a sum of graphs with $n$ connected components, i.e. a sum of terms of the form \eqref{eqn_conncomp}.

By Equation \eqref{eqn_stardeform} and a simple, standard inductive argument, we may show that $\Lambda_k^n$ is the image of some $\tilde{\Lambda}_k^n\in T(\mathcal{H}_c^+)$ under the map \eqref{eqn_envelopemap}, and hence $\Lambda_k$ will be the image of
\[ \tilde{\Lambda}_k:=\sum_{n=1}^k\tilde{\Lambda}_k^n \]
under the map \eqref{eqn_envelopemap}. I claim that $\Lambda$ is the image of $\tilde{\Lambda}:=\sum_{k=1}^\infty\tilde{\Lambda}_k$ under \eqref{eqn_envelopemap}, but for this to be true, we must first prove that this formal sum converges.

Consider the grading that is induced on $T(\mathcal{H}_c^+)$ by the grading on $\mathcal{H}_c^+$ whose degree coincides with the total number of edges in a graph. I claim that if $k\geq (3N)^{N}$, then $\tilde{\Lambda}_k$ will be a sum of terms of degree $\geq N$. This will prove that $\tilde{\Lambda}$ is a convergent sum and establish that \eqref{eqn_envelopemap} is surjective.

If we consider for a moment the grading induced upon $T(\mathcal{H}_c^+)$ by the number of \emph{internal} edges, then if $n\geq N$, \eqref{eqn_conncomp} will have at least $N$ internal edges, as each $\Gamma_i$ has at least one. Since $\star$ has degree zero in the grading by internal edges, it follows from the construction of $\tilde{\Lambda}_k^n$ that it will be a sum of terms of degree $\geq N$ in the grading by internal edges, and hence a sum of terms of degree $\geq N$ in the grading by the total number of edges.

Hence, we need only consider $\tilde{\Lambda}_k^n$ for $n<N$. If $n<N$ and the graph \eqref{eqn_conncomp} has $k\geq (3N)^{N}$ edges, then some connected component $\Gamma_i$ must have at least $3^{N}N^{N-1}$ edges. Hence $\Gamma_1\star\cdots\star\Gamma_n\in\mathcal{H}_{c,\geq (3N)^{N-1}}^+$
and
\[ \Gamma_1\star\cdots\star\Gamma_n = \Gamma_1\sqcup\cdots\sqcup\Gamma_n + \text{terms of order }\leq n-1. \]
Proceeding by induction to eliminate, in the same manner, terms of lower and lower order; we may conclude that $\Lambda_k^n$ is the image of an expression $\tilde{\Lambda}_k^n$ consisting of a sum of terms of degree $\geq 3N\geq N$ in the grading induced by the total number of edges.

Now that we have shown that this map is surjective, we must show that it is injective. Suppose that given $\tilde{\Lambda}\in\widehat{U}(\widehat{\mathcal{H}}_c^+)$, we know that its image $\Lambda\in\widehat{\mathcal{H}}^+$ under \eqref{eqn_envelopemap} is zero. We must show $\tilde{\Lambda}=0$. Since by Remark \ref{rem_grading}, $\star$ has degree zero in the grading induced by the number of internal edges, it is sufficient to assume that $\tilde{\Lambda}$ has homogeneous degree $n$ in this grading. In this case, $\tilde{\Lambda}$ will be represented by a (formal) sum of terms of the form
\[ \Gamma_1\otimes\cdots\otimes\Gamma_k \]
with each $\Gamma_i\in\mathcal{H}_c^+$. Note that we must have $k\leq n$ as each graph $\Gamma_i$ has at least one internal edge. Now since the number of connected components $k$ is bounded above by $n$, a standard inductive argument implies that $\tilde{\Lambda}=0$.
\end{proof}

\subsection{Pre-Lie structure on graphs}

Having reduced the study of the Hopf algebra $(\widehat{\mathcal{H}},\star,\Delta)$ to the study of the Lie algebra structure on its primitive elements, our next goal should be to describe this Lie algebra structure more explicitly. This was carried out by Connes and Kreimer in \cite{CKhopf} and \cite{CKelim} where they described this Lie algebra in terms of a pre-Lie structure on graphs in which the pre-Lie bracket of graphs was defined by inserting one graph into the other. The following definition is due to them.

\begin{defi}
Let $\Gamma_1,\Gamma_2\in\mathcal{H}^+_c$ be two connected graphs and let $v\in\vint{\Gamma_1}$ be an internal vertex of $\Gamma_1$. If the valency of $v$ differs from the number of external edges of $\Gamma_2$, we define
\[ \Gamma_1\circ_v\Gamma_2 = 0. \]
Otherwise, we will define $\Gamma_1\circ_v\Gamma_2$ to be the graph obtained by inserting $\Gamma_2$ into $\Gamma_1$ at the vertex $v$. More precisely, let $\sigma$ be a bijection between the incident half-edges of the vertex $v$ and the external edges of $\Gamma_2$.
\begin{displaymath}
\begin{array}{ccc}
v & \to & \eext{\Gamma_2} \\
h & \mapsto & \sigma(h)
\end{array}
\end{displaymath}
The mapping $\sigma$ will determine precisely how $\Gamma_2$ is inserted into $\Gamma_1$. Every external edge $e\in\eext{\Gamma_2}$ is incident to some unique internal vertex $v_e\in\vint{\Gamma_2}$.

We define a new graph
\[\Gamma:=\Gamma_1\circ_{v,\sigma}\Gamma_2\]
as follows. The edges of $\Gamma$ are formed by discarding the external edges of $\Gamma_2$,
\[ E(\Gamma):=E(\Gamma_1)\sqcup [E(\Gamma_2) - \eext{\Gamma_2}]. \]
The structure of the vertices of $\Gamma$ is largely unchanged except that when $\Gamma_2$ is inserted into $\Gamma_1$, the internal vertex $v$ of $\Gamma_1$ is disbanded. Hence we must describe the new vertices to which the incident half-edges of $v$ are now attached. If $u$ is either an internal vertex of $\Gamma_2$ or a vertex of $\Gamma_1$ other than $v$, we define a vertex $u'$ of $\Gamma$ by stipulating that for a half-edge $h$ of $\Gamma$
\[ h\in u' \Leftrightarrow h\in u \text{ or } (h\in v \text{ and } v_{\sigma(h)}=u).  \]
We can then make the definition
\[ V(\Gamma):=\{u':u\in\vint{\Gamma_2} \text{ or } u\in V(\Gamma_1)-\{v\} \}. \]
It only remains to specify the external vertices of $\Gamma$. These coincide with the external vertices of $\Gamma_1$,
\[ \vext{\Gamma}:=\vext{\Gamma_1}. \]

\begin{figure}[htp]
\centering
\includegraphics{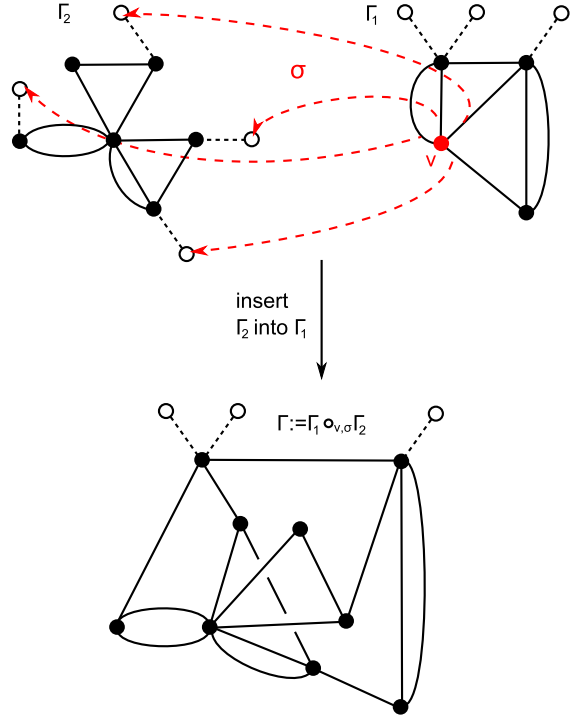}
\caption{Inserting graph $\Gamma_2$ into $\Gamma_1$ at vertex $v$ via the bijection $\sigma$.}
\end{figure}

With the above preliminaries out of the way, we make the definition
\begin{displaymath}
\begin{split}
\Gamma_1\circ\Gamma_2 &:= \sum_{v\in\vint{\Gamma_2}} \Gamma_1\circ_v\Gamma_2 \\
&:= \sum_{v\in\vint{\Gamma_2}} \left[\sum_{\sigma\in\Bij(\vext{\Gamma_1},v)} \Gamma_1\circ_{v,\sigma}\Gamma_2\right].
\end{split}
\end{displaymath}
\end{defi}

Connes and Kreimer observed that the above structure was pre-Lie in their paper \cite{CKelim}. Later, we will show that it is pre-Lie by comparing it to another well known pre-Lie structure on polynomial vector fields. The following theorem relates the above pre-Lie structure on connected graphs to the algebraic structure of the Hopf algebra $(\widehat{\mathcal{H}},\star,\Delta)$.

\begin{theorem}
Let $\Gamma_1,\Gamma_2\in\mathcal{H}_c^+$ be two connected graphs, then
\[ [\Gamma_1,\Gamma_2]=\Gamma_1\circ\Gamma_2-\Gamma_2\circ\Gamma_1. \]
\end{theorem}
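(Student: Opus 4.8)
The plan is to identify the commutator $[\Gamma_1,\Gamma_2]$ inside $\widehat{\mathcal{H}}_c^+$ with the difference $\Gamma_1\circ\Gamma_2 - \Gamma_2\circ\Gamma_1$ by dualizing. Since the Lie bracket on the primitive elements $\widehat{\mathcal{H}}_c^+$ is induced, via Theorem \ref{thm_milmor}, from the multiplication $\star$ on $\widehat{\mathcal{H}}^+$ — which in turn is dual to the Connes-Kreimer coproduct $\nabla$ — it suffices to compute the pairing $\langle [\Gamma_1,\Gamma_2], \Gamma \rangle$ against an arbitrary connected graph $\Gamma\in\mathcal{H}_c^+$, keeping careful track of the automorphism factors coming from the isomorphism $I$ of Proposition \ref{prop_dualiso}. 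Concretely, I would first observe that on primitive elements $\Gamma_1\star\Gamma_2 = \Gamma_1\sqcup\Gamma_2 + (\text{lower order})$ by \eqref{eqn_stardeform}, so that the commutator $\Gamma_1\star\Gamma_2 - \Gamma_2\star\Gamma_1$ is the order-one (connected) part of $\Gamma_1\star\Gamma_2$ antisymmetrized; thus $\langle[\Gamma_1,\Gamma_2],\Gamma\rangle$ picks out precisely the coefficient with which the connected graph $\Gamma$ appears in $\Gamma_1\star\Gamma_2$, minus the same with $\Gamma_1,\Gamma_2$ swapped.

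Next I would unwind the defining relation $I\circ\star = \nabla^*\circ(I\cotimes I)$ to express this coefficient. Writing it out, $\langle \Gamma_1\star\Gamma_2, \Gamma\rangle$ (up to the automorphism normalization) equals the coefficient of $\Gamma_1\otimes\Gamma_2$ in $\nabla(\Gamma)$, which by \eqref{eqn_ckcoprod} is $\sum_{\gamma\subsetneq\Gamma}\langle\gamma,\Gamma_1\rangle\langle\Gamma/\gamma,\Gamma_2\rangle$ — that is, the number of subgraphs $\gamma$ of $\Gamma$ with $\gamma\cong\Gamma_1$ and $\Gamma/\gamma\cong\Gamma_2$, weighted by the appropriate $|\Aut|$ factors. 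The heart of the proof is then a bijective/counting argument: I must show that, after incorporating the automorphism weights, this count of ``ways to find $\Gamma_2$ with a copy of $\Gamma_1$ collapsed inside it'' matches the coefficient of $\Gamma$ in $\Gamma_2\circ\Gamma_1 = \sum_{v\in\vint{\Gamma_1}}\sum_{\sigma\in\Bij(\vext{\Gamma_2},v)}\Gamma_2\circ_{v,\sigma}\Gamma_1$. (Note the order: contracting a subgraph isomorphic to $\Gamma_1$ in $\Gamma$ and getting $\Gamma_2$ is the reverse of inserting $\Gamma_1$ into $\Gamma_2$ at a vertex.) The key correspondence is: a subgraph $\gamma\subseteq\Gamma$ with $\gamma\cong\Gamma_1$ and $\Gamma/\gamma\cong\Gamma_2$ corresponds to a choice of internal vertex $v\in\vint{\Gamma_2}$ (the image of the collapsed $\gamma$) together with a gluing bijection $\sigma$ between the half-edges at $v$ and $\eext{\Gamma_1}=\eext{\gamma}$; one checks the insertion $\Gamma_2\circ_{v,\sigma}\Gamma_1$ recovers $\Gamma$, and this assignment is a bijection once one quotients both sides by the relevant automorphism groups.

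The main obstacle, and the step deserving the most care, is the bookkeeping of automorphism factors — reconciling the $|\Aut(\Gamma)|$, $|\Aut(\Gamma_1)|$, $|\Aut(\Gamma_2)|$ weights that enter through the isomorphism \eqref{eqn_dualiso} with the overcounting inherent in summing over \emph{all} bijections $\sigma$ in the definition of $\circ$ and over \emph{all} subgraphs $\gamma$ (as opposed to isomorphism classes of configurations). The clean way to handle this is an orbit-counting (Burnside-type) argument: the group $\Aut(\Gamma)$ acts on the set of pairs $(\gamma,\text{identification }\gamma\cong\Gamma_1)$ with $\Gamma/\gamma\cong\Gamma_2$, the stabilizers are identified with subgroups of $\Aut(\Gamma_1)\times\Aut(\Gamma_2)$, and the resulting orbit-stabilizer formula is exactly what converts the subgraph count into the weighted insertion count. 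Once this matching is established for $\Gamma_2\circ\Gamma_1$, antisymmetrizing $\Gamma_1\leftrightarrow\Gamma_2$ gives $\langle\Gamma_1\circ\Gamma_2 - \Gamma_2\circ\Gamma_1,\Gamma\rangle$ (a sign and ordering check is needed here), and since $\Gamma$ ranges over a basis of $\mathcal{H}_c^+$ and the pairing is nondegenerate on each graded piece, this identifies $[\Gamma_1,\Gamma_2]$ with $\Gamma_1\circ\Gamma_2-\Gamma_2\circ\Gamma_1$. Finally, one notes that the convergence issues are already handled: both sides live in $\widehat{\mathcal{H}}_c^+$ and the grading by internal edges is respected, so the identity of formal sums follows from the identity of each graded component.
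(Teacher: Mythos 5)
Your proposal follows essentially the same route as the paper: reduce the claim to the identity $\langle\Gamma_1\star\Gamma_2,\Gamma\rangle=\langle\Gamma_1\circ\Gamma_2,\Gamma\rangle$ for connected $\Gamma$, combine it with Equation \eqref{eqn_stardeform} to conclude $\Gamma_1\star\Gamma_2=\Gamma_1\sqcup\Gamma_2+\Gamma_1\circ\Gamma_2$, and antisymmetrize, the disjoint-union terms cancelling. The one real difference is that the paper does not prove the key pairing identity at all -- it simply cites Theorem 2 of \cite{CKhopf} -- whereas you propose to establish it from scratch via the duality $I\circ\star=\nabla^*\circ(I\cotimes I)$ and an orbit-stabilizer matching of the $|\Aut|$ weights against the overcounting in $\sum_\sigma$. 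That is indeed how the cited result is proved, and your outline of it is sound, but be aware that this combinatorial core (which you rightly call the heart of the proof) is only sketched, not executed; in particular the ``sign and ordering check'' you defer is genuinely needed, since the paper's own definition of $\circ$ is stated once with $v\in\vint{\Gamma_1}$ and once with $v\in\vint{\Gamma_2}$, so one must fix which graph is inserted into which before the antisymmetrization can be trusted to produce $\Gamma_1\circ\Gamma_2-\Gamma_2\circ\Gamma_1$ rather than its negative. If you are content to quote \cite{CKhopf} for the pairing identity, your argument collapses to exactly the paper's proof; if not, the Burnside-type bookkeeping still has to be written out.
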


\begin{proof}
This follows from the stronger identity
\[ \langle\Gamma_1\star\Gamma_2,\Gamma\rangle = \langle\Gamma_1\circ\Gamma_2,\Gamma\rangle \]
for connected graphs $\Gamma_1,\Gamma_2,\Gamma\in\mathcal{H}_c^+$ which was proven in Theorem 2 of \cite{CKhopf}. From this and equation \eqref{eqn_stardeform} it follows that
\[ \Gamma_1\star\Gamma_2 = \Gamma_1\sqcup\Gamma_2 + \Gamma_1\circ\Gamma_2 \]
for connected graphs $\Gamma_1,\Gamma_2\in\mathcal{H}_c^+$, from which the theorem follows.
\end{proof}

\section{Nondiagrammatic description of the Connes-Kreimer Hopf algebra} \label{sec_nongraphical}

In this section we prove our main theorem relating the insertion pre-Lie structure of Connes and Kreimer to the canonical pre-Lie structure on polynomial vector fields. Here, the invariant theory for the orthogonal groups plays a crucial role in allowing us to pass back and forth between invariant tensors and graphs. Since the objects that we work with are formal in nature, some technicalities arise in the definition of this pre-Lie structure as we must ensure that certain defining expressions are convergent; however, this is a relatively minor issue. We then use this pre-Lie structure to build a Hopf algebra isomorphic to the Connes-Kreimer Hopf algebra described in Section \ref{sec_graphical}.

\subsection{Invariant theory for the Orthogonal Group}

We begin our nondiagrammatic description of the Connes-Kreimer Hopf algebra by recalling some elementary facts concerning the invariant theory for the orthogonal groups. The invariant theory for $O(n)$ is described using \emph{chord diagrams}. These chord diagrams will later allow us to make contact with the graphical description of the Connes-Kreimer Hopf algebra outlined in Section \ref{sec_graphical}.

Recall that for every positive integer $n$, there is a canonical vector space $V_n$ of dimension $n$ equipped with the nondegenerate symmetric bilinear form \eqref{eqn_canonicalform}. Note that there are canonical inclusions
\begin{equation} \label{eqn_inclusion}
\begin{array}{ccc}
V_n & \to & V_{n+1} \\
x_i & \mapsto & x_i
\end{array}
\end{equation}
defined in terms of the basis elements $x_i$ which preserve the bilinear forms on $V_n$ and $V_{n+1}$.

Now we want to make some definitions concerning the invariant theory for the group $O(n)$ of linear endomorphisms of $V_n$ which preserve the canonical nondegenerate bilinear form.

\begin{defi}
A \emph{chord diagram} $c$ on the set $\{1,\ldots,2N\}$ is a partition of $\{1,\ldots,2N\}$ into pairs
\begin{equation} \label{eqn_chordrep}
c=\{i_1,j_1\},\{i_2,j_2\}\ldots,\{i_N,j_N\}.
\end{equation}
Note that the sets in this partition are not ordered, even though we have chosen an ordering in \eqref{eqn_chordrep} in order to represent it; hence both
\[ \{j_1,i_1\},\ldots,\{j_N,i_N\} \quad\text{and}\quad \{i_{\sigma(1)},j_{\sigma(1)}\},\ldots,\{i_{\sigma(N)},j_{\sigma(N)}\},\quad\sigma\in\mathbb{S}_n \]
represent the same chord diagram. The set of all chord diagrams will be denoted by $\chord{N}$.
\end{defi}

\begin{defi} \label{def_invariant}
Given a chord diagram $c\in\chord{N}$, define the $O(n)$-invariant $\beta_c\in\left[(V_n^*)^{\otimes 2N}\right]^{O(n)}$ by the formula
\[ \beta_c(x_1\otimes\cdots\otimes x_{2N})= \langle x_{i_1},x_{j_1} \rangle \cdots \langle x_{i_N},x_{j_N} \rangle. \]
It is clear that the invariant $\beta_c$ does not depend upon the choice of representation \eqref{eqn_chordrep} of the chord diagram. Similarly, we may define an $O(n)$-coinvariant $z_c\in\left[V_n^{\otimes 2N}\right]_{O(n)}$ if $n\geq N$ as follows. Define the permutation $\sigma_c$ by
\[ \sigma_c:=\left(\begin{array}{ccccccc} 1 & 2 & 3 & 4 & \ldots & 2N-1 & 2N \\ \downarrow & \downarrow & \downarrow & \downarrow & \ldots & \downarrow & \downarrow \\ i_1 & j_1 & i_2 & j_2 & \ldots & i_N & j_N \end{array}\right). \]
Define $z_c$ by
\[ z_c:=\sigma_c\cdot(x_1\otimes x_1\otimes x_2\otimes x_2 \otimes \cdots \otimes x_N \otimes x_N). \]
Note that even though the permutation $\sigma_c$ is not well-defined, depending as it does on a representation \eqref{eqn_chordrep} of the chord diagram $c$, the coinvariant $z_c$ does not depend on the choice of this representation.
\end{defi}

The following result, which is crucial to our nondiagrammatic description of the Connes-Kreimer Hopf algebra, may be found in \cite{loday}.

\begin{lemma} \label{lem_invariants}
The spaces of $O(n)$ invariants and coinvariants may be described as follows:
\begin{enumerate}
\item \label{item_invariant}
The space $\left[(V_n^*)^{\otimes 2N}\right]^{O(n)}$ is spanned by the invariants $\{\beta_c\}_{c\in\chord{N}}$ defined in Definition \ref{def_invariant}. If $n\geq N$ then the set $\{\beta_c\}_{c\in\chord{N}}$ forms a basis of $\left[(V_n^*)^{\otimes 2N}\right]^{O(n)}$ that is indexed by chord diagrams.
\item \label{item_coinvariant}
If $n\geq N$ then the set $\{z_c\}_{c\in\chord{N}}$ forms a basis of the set $\left[V_n^{\otimes 2N}\right]_{O(n)}$.
\item \label{item_invanish}
If $k=2N+1$ is odd, then both $\left[(V_n^*)^{\otimes k}\right]^{O(n)}$ and $\left[(V_n^*)^{\otimes k}\right]_{O(n)}$ vanish;
\[ \left[(V_n^*)^{\otimes k}\right]^{O(n)} = \{0\} = \left[(V_n^*)^{\otimes k}\right]_{O(n)}. \]
\end{enumerate}
\end{lemma}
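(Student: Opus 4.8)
The plan is to reduce the two assertions about invariants to the First Fundamental Theorem of invariant theory for $O(n)$, and then to deduce the assertion about coinvariants by exhibiting a perfect pairing between invariants and coinvariants under which $\{\beta_c\}$ and $\{z_c\}$ are dual bases.

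First I would prove the spanning statement and the odd-degree vanishing. An element of $(V_n^*)^{\otimes k}$ is the same thing as a multilinear function of $k$ vector arguments $v_1,\dots,v_k\in V_n$, and it is $O(n)$-invariant precisely when that function is. By the First Fundamental Theorem for $O(n)$ (valid since $\gf$ has characteristic zero; see \cite{loday}), the algebra of $O(n)$-invariant polynomial functions of $v_1,\dots,v_k$ is generated by the pairwise inner products $\langle v_i,v_j\rangle$. Extracting from such a polynomial the part that is multilinear, i.e. of degree one in each argument, forces each surviving monomial to be a product $\langle v_{i_1},v_{j_1}\rangle\cdots\langle v_{i_N},v_{j_N}\rangle$ whose index pairs partition $\{1,\dots,k\}$; this is impossible when $k$ is odd (so the invariants vanish then), and for $k=2N$ these monomials are exactly the $\beta_c$. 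This proves that $\{\beta_c\}_{c\in\chord N}$ spans $[(V_n^*)^{\otimes 2N}]^{O(n)}$ for every $n$. The vanishing of the odd-degree coinvariants is seen even more quickly: $-\id\in O(n)$ acts on $(V_n^*)^{\otimes k}$ and on $V_n^{\otimes k}$ by $(-1)^k$, so for odd $k$ every invariant and every coinvariant is its own negative, hence zero.

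It remains to treat the case $n\geq N$. Since each $\beta_c$, viewed as a linear functional on $V_n^{\otimes 2N}$, is $O(n)$-invariant, it annihilates every element $g\cdot w-w$ and so descends to a functional on $[V_n^{\otimes 2N}]_{O(n)}$; this identifies $[(V_n^*)^{\otimes 2N}]^{O(n)}$ with the full linear dual of $[V_n^{\otimes 2N}]_{O(n)}$. I would then compute $\langle\beta_c,z_{c'}\rangle$ directly from the construction of $z_{c'}$: its $m$-th tensor slot carries the basis vector $x_{f(m)}$, where $f(m)$ is the index of the chord of $c'$ containing $m$ (this is where $n\geq N$ is used, so that $x_1,\dots,x_N$ are available and $z_{c'}$ is defined). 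Using $\langle x_a,x_b\rangle=\delta_{ab}$ we get
\[ \langle\beta_c,z_{c'}\rangle=\prod_{\ell=1}^N\langle x_{f(i_\ell)},x_{f(j_\ell)}\rangle=\prod_{\ell=1}^N\delta_{f(i_\ell),f(j_\ell)}, \]
which equals $1$ exactly when every chord $\{i_\ell,j_\ell\}$ of $c$ is a chord of $c'$; since $c$ and $c'$ are both perfect matchings on $\{1,\dots,2N\}$, this occurs if and only if $c=c'$. Thus the pairing matrix is the identity. Consequently the $\beta_c$ are linearly independent, and being a spanning set they form a basis, so $[(V_n^*)^{\otimes 2N}]^{O(n)}$ — and therefore its dual $[V_n^{\otimes 2N}]_{O(n)}$ — has dimension $|\chord N|$. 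The identity pairing matrix also forces the $|\chord N|$ elements $z_c$ to be linearly independent, and by this dimension count they form a basis of $[V_n^{\otimes 2N}]_{O(n)}$.

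The only genuinely substantial ingredient is the First Fundamental Theorem for $O(n)$ itself; once that is invoked, the sign argument, the extraction of the multilinear part, and the combinatorial evaluation of $\langle\beta_c,z_{c'}\rangle$ are all routine. The one point to handle with a little care is that $z_{c'}$ is well-defined independently of the representation \eqref{eqn_chordrep} — already asserted in Definition \ref{def_invariant} — which holds because swapping the two endpoints of a chord fixes the corresponding tensor factor $x_\ell\otimes x_\ell$, while permuting the chords can be absorbed by a permutation matrix, which lies in $O(n)$, so the coinvariant class is unchanged.
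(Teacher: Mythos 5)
Your proof is correct and follows essentially the same route as the paper: item (1) via the First Fundamental Theorem for $O(n)$ (which the paper simply cites as Theorems 9.5.2 and 9.5.5 of \cite{loday}), item (3) via the action of $-\id\in O(n)$, and item (2) via the identification of $\left[(V_n^*)^{\otimes 2N}\right]^{O(n)}$ with the dual of $\left[V_n^{\otimes 2N}\right]_{O(n)}$ together with the pairing identity $\beta_c(z_{c'})=\delta_{cc'}$. The only (harmless) difference is that you rederive the linear independence of the $\beta_c$ for $n\geq N$ from this pairing rather than taking it directly from the cited theorem.
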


\begin{proof}
Item \ref{item_invariant} is a restatement of Theorem 9.5.2 and Theorem 9.5.5 of \cite{loday}. Item \ref{item_coinvariant} follows from Item \ref{item_invariant} by observing that the space $\left[(V_n^*)^{\otimes 2N}\right]^{O(n)}$ is dual to the space $\left[V_n^{\otimes 2N}\right]_{O(n)}$ and that the following equation holds;
\begin{equation} \label{eqn_invpair}
\beta_c(z_{c'}) = \delta_{cc'}, \quad c,c'\in\chord{N}.
\end{equation}
This equation implies that $\{z_c\}_{c\in\chord{N}}$ is a dual basis to the basis $\{\beta_c\}_{c\in\chord{N}}$ of $\left[(V_n^*)^{\otimes 2N}\right]^{O(n)}$.

Item \ref{item_invanish} follows as a simple consequence of the fact that these tensors are invariant under the orthogonal transformation
\[x\mapsto -x.\]
\end{proof}

\begin{rem}
Note that the invariants $\beta_c$ are natural with respect to the inclusion \eqref{eqn_inclusion}. If we denote by
\[ \beta_c^n\in\left[(V_n^*)^{\otimes 2N}\right]^{O(n)} \quad\text{and}\quad \beta_c^{n+1}\in\left[(V_{n+1}^*)^{\otimes 2N}\right]^{O(n+1)} \]
the invariants corresponding to the chord diagram $c\in\chord{N}$ which live in the tensor powers of the vector spaces $V_n^*$ and $V_{n+1}^*$ respectively, then
\begin{equation} \label{eqn_invinc}
\beta_c^{n+1}\circ i = \beta_c^n
\end{equation}
where $i:V_n\to V_{n+1}$ is the inclusion \eqref{eqn_inclusion}.
\end{rem}

\subsection{Pre-Lie structure of polynomial vector fields}

Having given some of the necessary preliminary definitions, we now begin to define the pre-Lie algebra that we will work with. This pre-Lie algebra will be based on the well-known pre-Lie algebra of polynomial vector fields; let us recall how this is defined. If $Y$ is a (finite-dimensional) vector space then the space
\[ S(Y)\otimes Y^* \]
may be identified with the space of polynomial vector fields on $Y$, otherwise known as the space of derivations $\Der(S(Y))$ of the algebra $S(Y)$. If $\xi:=p(y)\frac{\partial}{\partial y_i}$ and $\eta:=q(y)\frac{\partial}{\partial y_j}$ are polynomial vector fields, then their pre-Lie bracket is defined by the formula
\begin{equation} \label{eqn_polyprelie}
\xi\circ\eta:=\left(q(y)\frac{\partial}{\partial y_j}[p(y)]\right)\frac{\partial}{\partial y_i}.
\end{equation}

Although our pre-Lie structure will be based on \eqref{eqn_polyprelie}, the object that we will construct will be a formal object and our vector space $Y$ will not be finite-dimensional. Consequently, in defining our pre-Lie algebra, we will have to deal with (amongst other things) issues regarding the convergence of the defining expressions. We now turn to these matters and the definition of our pre-Lie algebra.

\subsection{The same pre-Lie structure on formal objects}

We wish to extend this definition of the pre-Lie bracket of polynomial vector fields to a certain profinite vector space that is a formal version of that considered above. The object that we construct, along with its fundamental pre-Lie structure, will be the crucial device in describing the algebraic structures of Section \ref{sec_graphical} in a nondiagrammatic manner. To this end, we start by introducing polynomials and power series.

\begin{defi}
Let $V$ be a finite-dimensional vector space. We will denote the polynomial algebra and the power series algebra on $V$ by
\[ \mathfrak{h}[V]:=S(V^*)=\bigoplus_{n=0}^\infty (V^*)^{\otimes n}/\mathbb{S}_n\quad\text{and}\quad\widehat{\mathfrak{h}}[V]:=\widehat{S}(V^*)=\prod_{n=0}^\infty (V^*)^{\otimes n}/\mathbb{S}_n \]
respectively. We will denote by $\mathfrak{h}_{\geq k}[V]$ and $\widehat{\mathfrak{h}}_{\geq k}[V]$ those subspaces of $\mathfrak{h}[V]$ and $\widehat{\mathfrak{h}}[V]$ generated by polynomials and power series of order $\geq k$;
\[ \mathfrak{h}_{\geq k}[V]:=\bigoplus_{n=k}^\infty (V^*)^{\otimes n}/\mathbb{S}_n\quad\text{and}\quad\widehat{\mathfrak{h}}_{\geq k}[V]:=\prod_{n=k}^\infty (V^*)^{\otimes n}/\mathbb{S}_n. \]
\end{defi}

Now we may introduce the underlying vector space that we will define our algebraic structures on.

\begin{defi} \label{defi_prelie}
Let $V$ be a vector space with a nondegenerate symmetric bilinear form. We define a vector space $\mathfrak{l}[V]$ by
\[ \mathfrak{l}[V]:=\left[\widehat{S}(\widehat{\mathfrak{h}}_{\geq 1}[V])\cotimes\widehat{\mathfrak{h}}[V]\right]^{O(V)}. \]
We shall denote the vector space $\mathfrak{l}[V_n]$ associated to the canonical vector space $V_n$ by $\mathfrak{l}^n$. The canonical inclusions \eqref{eqn_inclusion} induce maps
\[ \mathfrak{l}^{n+1}\to\mathfrak{l}^n. \]
We shall denote by $\mathfrak{l}^\infty$ the inverse limit of the vector spaces $\mathfrak{l}^n$;
\begin{equation} \label{eqn_invlim}
\mathfrak{l}^{\infty}:=\invlim{n}{\mathfrak{l}^n}.
\end{equation}
\end{defi}

Later, we shall define a pre-Lie structure on a subspace of $\mathfrak{l}[V]$ using Equation \eqref{eqn_polyprelie}. In order to accommodate our arguments regarding the convergence of certain expressions, we will introduce a grading on $\mathfrak{l}[V]$. The space $\mathfrak{l}[V]$ has two gradings
\begin{equation} \label{eqn_liegrading}
\mathfrak{l}[V]=\prod_{n=0}^\infty \mathfrak{l}_{n\bullet}[V] \quad\text{and}\quad \mathfrak{l}[V]=\prod_{k=0}^\infty \mathfrak{l}_{\bullet k}[V].
\end{equation}
Here, $x\in\mathfrak{l}_{n\bullet}[V]$ if and only if it is a tensor in $V^*$ of degree $2n$; that is to say that it is represented by a tensor in $(V^*)^{\otimes 2n}$. The space $\mathfrak{l}_{\bullet k}[V]$ is defined by
\[ \mathfrak{l}_{\bullet k}[V] := \left[\widehat{S}(\widehat{h}_{\geq 1}[V])\cotimes [(V^*)^{\otimes k}]_{\mathbb{S}_k}\right]^{O(V)}. \]
These gradings combine to give $\mathfrak{l}[V]$ a bigrading
\[ \mathfrak{l}[V]=\prod_{n,k=0}^\infty \mathfrak{l}_{nk}[V] \]
where $\mathfrak{l}_{nk}[V] := \mathfrak{l}_{n\bullet}[V]\cap\mathfrak{l}_{\bullet k}[V]$.

Using this grading we make the definition
\[ \mathfrak{l}_+[V] := \prod_{n>k\geq 0} \mathfrak{l}_{nk}[V] \]
It is on this space that we shall define our pre-Lie structure. We denote the corresponding subspace of $\left[\widehat{S}(\widehat{\mathfrak{h}}_{\geq 1}[V])\cotimes\widehat{\mathfrak{h}}[V]\right]$ by $\left[\widehat{S}(\widehat{\mathfrak{h}}_{\geq 1}[V])\cotimes\widehat{\mathfrak{h}}[V]\right]_+$ so that
\[ \mathfrak{l}_+[V] = \left[\widehat{S}(\widehat{\mathfrak{h}}_{\geq 1}[V])\cotimes\widehat{\mathfrak{h}}[V]\right]_+^{O(V)} \]

To define the pre-Lie structure we define isomorphisms
\[ D:\widehat{\mathfrak{h}}[V]\to\widehat{\mathfrak{h}}[V^*] \]
and
\[ T:\widehat{\mathfrak{h}}[V^*]\to\mathfrak{h}[V]^* \]
by the following formulae;
\begin{displaymath}
\begin{split}
D(f_1\otimes\cdots\otimes f_k) &:= \langle f_1,- \rangle^{-1} \otimes\cdots\otimes \langle f_k,- \rangle^{-1} \\
T(x_1\otimes\cdots\otimes x_k) &:= \left[f_1\otimes\cdots\otimes f_k\mapsto\sum_{\sigma\in\mathbb{S}_k} x_{\sigma(1)}(f_1)\cdots x_{\sigma(k)}(f_k)\right]
\end{split}
\end{displaymath}
where $\innprod^{-1}$ is the inverse inner product on $V^*$. A pre-Lie structure on $\left[\widehat{S}(\widehat{\mathfrak{h}}_{\geq 1}[V])\cotimes\widehat{\mathfrak{h}}[V]\right]_+$ may then be specified as the unique pre-Lie structure fitting into the following commutative diagram.
\begin{displaymath}
\xymatrix{ \left[\widehat{S}(\widehat{\mathfrak{h}}_{\geq 1}[V])\cotimes\mathfrak{h}[V]^*\right]_+ \cotimes \left[\widehat{S}(\widehat{\mathfrak{h}}_{\geq 1}[V])\cotimes\mathfrak{h}[V]^*\right]_+ \ar[r]^-{\circ} & \left[\widehat{S}(\widehat{\mathfrak{h}}_{\geq 1}[V])\cotimes\mathfrak{h}[V]^*\right]_+ \\ \left[\widehat{S}(\widehat{\mathfrak{h}}_{\geq 1}[V])\cotimes\widehat{\mathfrak{h}}[V]\right]_+ \cotimes \left[\widehat{S}(\widehat{\mathfrak{h}}_{\geq 1}[V])\cotimes\widehat{\mathfrak{h}}[V]\right]_+ \ar[r]^-{\circ} \ar[u]^{(\id\cotimes TD)\cotimes(\id\cotimes TD)} & \left[\widehat{S}(\widehat{\mathfrak{h}}_{\geq 1}[V])\cotimes\widehat{\mathfrak{h}}[V]\right]_+ \ar[u]^{(\id\cotimes TD)} }
\end{displaymath}
where the pre-Lie structure on the top row is that given by composition of polynomial vector fields as in Equation \eqref{eqn_polyprelie}. Here we must be careful of course, since we work with formally complete objects and our space $Y=\widehat{\mathfrak{h}}[V]$ is no longer finite-dimensional, to check that this pre-Lie structure is well-defined and does not lead to divergent expressions. Let us denote by $\left[\widehat{S}(\widehat{\mathfrak{h}}_{\geq 1}[V])\cotimes\widehat{\mathfrak{h}}[V]\right]_{nk}$ the component subspace of $\left[\widehat{S}(\widehat{\mathfrak{h}}_{\geq 1}[V])\cotimes\widehat{\mathfrak{h}}[V]\right]$ that corresponds to the graded component $\mathfrak{l}_{nk}[V]$ so that
\[ \mathfrak{l}_{nk}[V] = \left[\widehat{S}(\widehat{\mathfrak{h}}_{\geq 1}[V])\cotimes\widehat{\mathfrak{h}}[V]\right]_{nk}^{O(V)} \]
One may check easily that
\[ \circ \left(\left[\widehat{S}(\widehat{\mathfrak{h}}_{\geq 1}[V])\cotimes\widehat{\mathfrak{h}}[V]\right]_{n_1k_1},\left[\widehat{S}(\widehat{\mathfrak{h}}_{\geq 1}[V])\cotimes\widehat{\mathfrak{h}}[V]\right]_{n_2k_2}\right) \subset \left[\widehat{S}(\widehat{\mathfrak{h}}_{\geq 1}[V])\cotimes\widehat{\mathfrak{h}}[V]\right]_{n_1+n_2-k_2,k_1} \]
Furthermore
\[ \circ \left(\left[\widehat{S}(\widehat{\mathfrak{h}}_{\geq 1}[V])\cotimes\widehat{\mathfrak{h}}[V]\right]_{n_1k_1},\left[\widehat{S}(\widehat{\mathfrak{h}}_{\geq 1}[V])\cotimes\widehat{\mathfrak{h}}[V]\right]_{n_2k_2}\right) = \{0\}, \quad\text{if }k_2>2n_1-k_1. \]
Hence it follows that
\begin{displaymath}
\begin{split}
\circ \left(\left[\widehat{S}(\widehat{\mathfrak{h}}_{\geq 1}[V])\cotimes\widehat{\mathfrak{h}}[V]\right]_{\geq N,\bullet},\left[\widehat{S}(\widehat{\mathfrak{h}}_{\geq 1}[V])\cotimes\widehat{\mathfrak{h}}[V]\right]_+\right) & \subset \left[\widehat{S}(\widehat{\mathfrak{h}}_{\geq 1}[V])\cotimes\widehat{\mathfrak{h}}[V]\right]_{\geq N,\bullet} \\
\circ \left(\left[\widehat{S}(\widehat{\mathfrak{h}}_{\geq 1}[V])\cotimes\widehat{\mathfrak{h}}[V]\right]_+,\left[\widehat{S}(\widehat{\mathfrak{h}}_{\geq 1}[V])\cotimes\widehat{\mathfrak{h}}[V]\right]_{+;\geq 2N,\bullet}\right) & \subset \left[\widehat{S}(\widehat{\mathfrak{h}}_{\geq 1}[V])\cotimes\widehat{\mathfrak{h}}[V]\right]_{\geq N,\bullet}
\end{split}
\end{displaymath}
Since there is no problem in defining $\circ$ in each bidegree, it follows that $\circ$ is a well-defined pre-Lie structure on $\left[\widehat{S}(\widehat{\mathfrak{h}}_{\geq 1}[V])\cotimes\widehat{\mathfrak{h}}[V]\right]_+$ whose defining expression is convergent. It is now simple to check that this pre-Lie structure preserves the space of $O(V)$-invariants and hence gives rise to a pre-Lie structure on the space $\mathfrak{l}_+[V]$.

\subsection{Hopf algebra structure of $\mathfrak{l}^\infty$} \label{sec_tensorhopf}

We shall now explain how $\mathfrak{l}^\infty$ has the structure of a commutative cocommutative Hopf algebra. Later, we shall relate this structure to the commutative cocommutative Hopf algebra $(\widehat{\mathcal{H}},\sqcup,\Delta)$.

\begin{defi}
Let us denote the canonical multiplication on the symmetric algebra $\widehat{S}(\widehat{\mathfrak{h}}_{\geq 1}[V])$ by $\mu_1$ and the canonical multiplication on the polynomial algebra $\widehat{\mathfrak{h}}[V]$ by $\mu_2$. This yields a commutative multiplication
\[ \mu:=\mu_1\cotimes\mu_2:\left(\widehat{S}(\widehat{\mathfrak{h}}_{\geq 1}[V])\cotimes\widehat{\mathfrak{h}}[V]\right)\cotimes\left(\widehat{S}(\widehat{\mathfrak{h}}_{\geq 1}[V])\cotimes\widehat{\mathfrak{h}}[V]\right) \to \left(\widehat{S}(\widehat{\mathfrak{h}}_{\geq 1}[V])\cotimes\widehat{\mathfrak{h}}[V]\right) \]
on the space $\widehat{S}(\widehat{\mathfrak{h}}_{\geq 1}[V])\cotimes\widehat{\mathfrak{h}}[V]$. One may check easily that this descends to $O(V)$-invariants and hence gives rise to a commutative product on $\mathfrak{l}[V]$. The commutative products on the $\mathfrak{l}^n:=\mathfrak{l}[V_n]$ assemble to give rise to a commutative product on $\mathfrak{l}^\infty$.
\end{defi}

\begin{defi}
To define a comultiplication on $\mathfrak{l}^\infty$, we first observe that
\begin{equation} \label{eqn_teninvsys}
\mathfrak{l}^\infty\cotimes\mathfrak{l}^\infty = \invlim{m,n}{\mathfrak{l}^m\cotimes\mathfrak{l}^n}.
\end{equation}
Now the symmetric algebra $\widehat{S}(\widehat{\mathfrak{h}}_{\geq 1}[V])$ has a canonical cocommutative comultiplication $\delta_1$ and the polynomial algebra $\widehat{\mathfrak{h}}[V]$ has a canonical cocommutative comultiplication $\delta_2$, which yields a cocommutative comultiplication
\[ \delta:=\delta_1\cotimes\delta_2:\left(\widehat{S}(\widehat{\mathfrak{h}}_{\geq 1}[V])\cotimes\widehat{\mathfrak{h}}[V]\right) \to \left(\widehat{S}(\widehat{\mathfrak{h}}_{\geq 1}[V])\cotimes\widehat{\mathfrak{h}}[V]\right)\cotimes\left(\widehat{S}(\widehat{\mathfrak{h}}_{\geq 1}[V])\cotimes\widehat{\mathfrak{h}}[V]\right). \]

Now we have canonical complementary inclusions
\[ \xymatrix{ V_m \ar[rr]^{x_i\mapsto x_i} && V_{m+n} && V_n \ar[ll]_{x_{m+i}\mapsfrom x_i}} \]
which give rise to corresponding projections
\begin{equation} \label{eqn_projections}
\xymatrix{ V_m^*  & V_{m+n}^* \ar[l]_{\pi_m} \ar[r]^{\pi_n} & V_n^* }
\end{equation}
and to a map
\[ \xymatrix{ \left[\widehat{S}(\widehat{\mathfrak{h}}_{\geq 1}[V_{m+n}])\cotimes\widehat{\mathfrak{h}}[V_{m+n}]\right]^{O(V_{m+n})} \ar[r]^-{\delta} & \left[\left(\widehat{S}(\widehat{\mathfrak{h}}_{\geq 1}[V_{m+n}])\cotimes\widehat{\mathfrak{h}}[V_{m+n}]\right)\cotimes \left(\widehat{S}(\widehat{\mathfrak{h}}_{\geq 1}[V_{m+n}])\cotimes\widehat{\mathfrak{h}}[V_{m+n}]\right)\right]^{O(V_{m+n})} \ar[d]^-{\pi_m\cotimes\pi_n} \\ & \left[\widehat{S}(\widehat{\mathfrak{h}}_{\geq 1}[V_{m}])\cotimes\widehat{\mathfrak{h}}[V_{m}]\right]^{O(V_m)}\cotimes \left[\widehat{S}(\widehat{\mathfrak{h}}_{\geq 1}[V_{n}])\cotimes\widehat{\mathfrak{h}}[V_{n}]\right]^{O(V_{n})} & \\ \mathfrak{l}^{m+n} \ar@{=}[uu] \ar[r]^{\delta} & \mathfrak{l}^m\cotimes\mathfrak{l}^n \ar@{=}[u] } \]

This gives rise to a series of maps
\[ \xymatrix{\mathfrak{l}^\infty \ar[r] & \mathfrak{l}^{m+n} \ar[r]^-{\delta} & \mathfrak{l}^{m}\cotimes\mathfrak{l}^{n} } \]
One can check that these maps are compatible with the inverse system \eqref{eqn_teninvsys}. Note that this calculation hinges crucially on the fact that we work with spaces of $O(V)$-invariants. This gives rise to a cocommutative comultiplication
\[ \delta:\mathfrak{l}^\infty\to\mathfrak{l}^\infty\cotimes\mathfrak{l}^\infty. \]
\end{defi}

One may easily check that the comultiplication $\delta$ is compatible with the multiplication $\mu$ so that $(\mathfrak{l}^\infty,\mu,\delta)$ forms a bialgebra. Furthermore, both $\mu$ and $\delta$ have bidegree zero in the bigrading on $\mathfrak{l}^\infty$ described by \eqref{eqn_liegrading}; that is to say that
\begin{equation} \label{eqn_bialbigrad}
\begin{split}
\mu(\mathfrak{l}^\infty_{n_1k_1},\mathfrak{l}^\infty_{n_2k_2}) & \subset \mathfrak{l}^\infty_{n_1+n_2,k_1+k_2} \\
\delta(\mathfrak{l}^\infty_{nk}) & \subset \bigoplus_{\begin{subarray}{c} n_1+n_2=n \\ k_1+k_2=k \end{subarray}} (\mathfrak{l}^\infty_{n_1k_1}\otimes\mathfrak{l}^\infty_{n_2k_2})
\end{split}
\end{equation}

\subsection{From graphs to tensors}

We are now ready to describe the passage from the graphical description of the Connes-Kreimer Hopf algebra outlined in Section \ref{sec_graphical} to the nongraphical description in terms of tensors. Here, the invariant theory of the orthogonal groups plays the crucial role. Its description in terms of chord diagrams allows us to connect graphs with tensors. We begin by describing a way of associating a graph to a chord diagram, given some extra information.

\begin{defi} \label{def_chordgraph}
Let $k_1,\ldots,k_m>0$ be a series of positive integers, $k_0\geq 0$ be a nonnegative integer and $c\in\chord{N}$ be a chord diagram
\[ c:=\{i_1,j_1\},\{i_2,j_2\},\ldots,\{i_N,j_N\} \]
such that $\sum_{i=0}^m k_i=2N$. We define a graph $\Gamma_{k_1,\ldots,k_m;k_0}(c)$ as follows. The graph has half-edges
\[ h_{11},\ldots,h_{1k_1};h_{21},\ldots,h_{2k_2};\ldots;h_{m1},\ldots,h_{mk_m};h_{01},\ldots,h_{0k_0}. \]
Writing them in this order from left to right allows us to identify each half-edge with an integer $1\leq i\leq 2N$,
\[ i\mapsto h_i; \]
for instance $h_{k_1+3}=h_{23}$ and $h_{k_1+k_2+1}=h_{31}$. From this we can use the chord diagram $c$ to describe the edges of $\Gamma:=\Gamma_{k_1,\ldots,k_m;k_0}(c)$;
\[ E(\Gamma) := \{\{h_{i_1},h_{j_1}\},\{h_{i_2},h_{j_2}\},\ldots,\{h_{i_N},h_{j_N}\}\}. \]
The external vertices of $\Gamma$ will be
\[ \vext{\Gamma} := \{\{h_{01}\},\{h_{02}\},\ldots,\{h_{0k_0}\}\} \]
whilst the internal vertices of $\Gamma$ will be
\[ \vint{\Gamma} := \{\{h_{11},\ldots,h_{1k_1}\},\{h_{21},\ldots,h_{2k_2}\},\ldots,\{h_{m1},\ldots,h_{mk_m}\}\}. \]
\end{defi}

Now we are in a position to construct a natural map
\begin{equation}
\Phi:\widehat{\mathcal{H}}\to\mathfrak{l}^\infty.
\end{equation}
This map has bidegree zero in the grading \eqref{eqn_graphgrade} and \eqref{eqn_liegrading}. That is to say that
\begin{equation} \label{eqn_mapgrade}
\Phi(\mathcal{H}_{n\bullet k})\subset\mathfrak{l}^\infty_{nk}.
\end{equation}
To do this, we will describe a family of maps
\[ \Phi^n:\widehat{\mathcal{H}}\to\mathfrak{l}^n \]
which are compatible with the inverse system \eqref{eqn_invlim} and hence give rise to the map $\Phi$.

If $\Gamma\in\widehat{\mathcal{H}}$ is a graph, then we may write $\Gamma=\Gamma_{k_1,\ldots,k_m;k_0}(c)$ for some choice of integers $k_0,k_1,\ldots,k_m$ and chord diagram $c$. Let $N:=\frac{1}{2}\sum_{i=0}^m k_i$ denote the number of edges of $\Gamma$ and define the map
\[ [-]_{k_1,\ldots,k_m;k_0}:(V^*)^{\otimes 2N}\to S^m(\mathfrak{h}_{\geq 1}[V])\otimes\mathfrak{h}[V] \]
by the commutative diagram
\[ \xymatrix{(V^*)^{\otimes 2N} \ar@{=}[d] \ar[r]^{[-]_{k_1,\ldots,k_m;k_0}} & S^m(\mathfrak{h}_{\geq 1}[V])\otimes\mathfrak{h}[V] \\ (V^*)^{\otimes k_1}\otimes\ldots\otimes (V^*)^{\otimes k_m}\otimes (V^*)^{\otimes k_0} \ar[d] \\ \left[(V^*)^{\otimes k_1}\right]_{\mathbb{S}_{k_1}}\otimes\ldots\otimes \left[(V^*)^{\otimes k_m}\right]_{\mathbb{S}_{k_m}}\otimes \left[(V^*)^{\otimes k_0}\right]_{\mathbb{S}_{k_0}} \ar[r] & \mathfrak{h}_{\geq 1}[V]^{\otimes m}\otimes\mathfrak{h}[V] \ar[uu] } \]
Now we define
\begin{equation} \label{eqn_mainmap}
\Phi^n(\Gamma):=[\beta_c]_{k_1,\ldots,k_m;k_0}\in \left[S^m(\mathfrak{h}_{\geq 1}[V_n])\otimes\mathfrak{h}[V_n]\right]^{O(V_n)} \subset\mathfrak{l}^n.
\end{equation}

We explain the graphical interpretation of this definition in Figure \ref{fig_tensorgraph} below.

\begin{figure}[htp]
\centering
\includegraphics{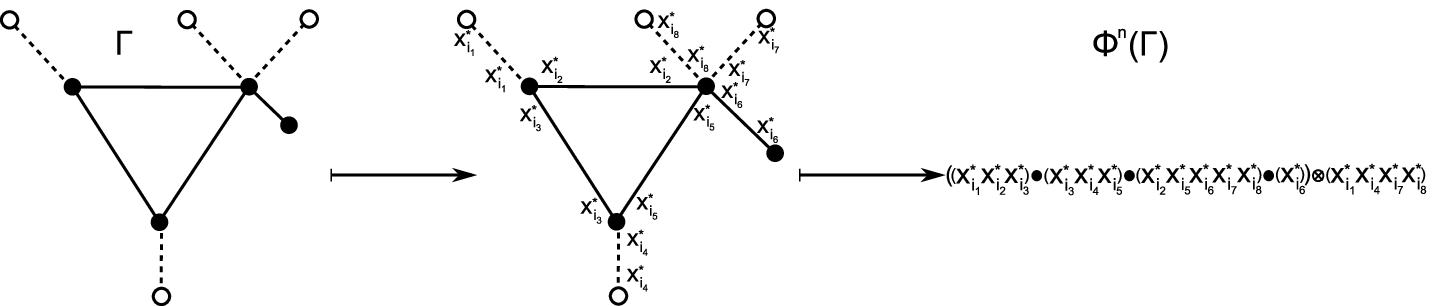}
\caption{The tensor associated to a graph by the map $\Phi^n$. The edges of the graph are decorated by the bilinear form
\[ \innprod=\sum_{i=1}^n x_i^* \otimes x_i^*. \]
The repeated $i_r$ indices indicate a summation $\sum_{i_r=1}^n$. Here, our tensor lies in $S^4(\mathfrak{h}_{\geq 1}[V_n])\otimes\mathfrak{h}[V_n]$.} \label{fig_tensorgraph}
\end{figure}

There of course may be many different equivalent choices of the integers $k_0,k_1,\ldots,k_m$ and chord diagram $c$, but one may easily observe that the map $\Phi^n$ does not depend upon this choice. By Equation \eqref{eqn_invinc}, it follows that the maps $\Phi^n$ are compatible with the inverse system \eqref{eqn_invlim} and hence yield the desired map \eqref{eqn_mainmap}.

Now, we relate the commutative cocommutative Hopf algebra structure on $\mathfrak{l}^\infty$ described in Section \ref{sec_tensorhopf} to the canonical commutative cocommutative Hopf algebra structure on $\widehat{\mathcal{H}}$ considered at the beginning of Section \ref{sec_ckhopf}.

\begin{theorem} \label{thm_bialgiso}
The map
\[ \Phi:(\widehat{\mathcal{H}},\sqcup,\Delta)\to(\mathfrak{l}^\infty,\mu,\delta) \]
is an isomorphism of bialgebras.
\end{theorem}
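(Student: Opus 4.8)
The plan is to verify separately that $\Phi$ is (i) a bijection, (ii) compatible with the multiplications $\sqcup$ and $\mu$, and (iii) compatible with the comultiplications $\Delta$ and $\delta$; compatibility with the units and counits is then immediate since $\Phi(\mathbf 1)=1$ and $\Phi$ has bidegree zero by \eqref{eqn_mapgrade}. Because $\widehat{\mathcal H}=\prod_{n,k}\mathcal H_{n\bullet k}$ and $\mathfrak l^\infty=\prod_{n,k}\mathfrak l^\infty_{nk}$, and $\Phi$ is a continuous map of bidegree zero by \eqref{eqn_mapgrade}, each of (i)--(iii) can be checked one bidegree at a time, where the relevant spaces are finite dimensional. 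For this to be literally a finite-dimensional matter I would first note that for $m\ge n$ the transition maps $\mathfrak l^{m+1}_{nk}\to\mathfrak l^m_{nk}$ are isomorphisms, so that $\mathfrak l^\infty_{nk}\cong\mathfrak l^m_{nk}$ for $m\ge n$: a graph with $n$ edges has $2n$ half-edges, so $\mathfrak l^m_{nk}$ is spanned by the tensors $[\beta_c]_{k_1,\dots,k_p;k}$ with $c\in\chord{n}$ and $k_1+\cdots+k_p=2n-k$, and by Lemma~\ref{lem_invariants} together with the naturality \eqref{eqn_invinc} of the $\beta_c$ the dimension stabilises once $m\ge n$.

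For (i) it then suffices to show that $\Phi^m\colon\mathcal H_{n\bullet k}\to\mathfrak l^m_{nk}$ is bijective for $m\ge n$. Surjectivity follows because the grouping map $(V_m^*)^{\otimes 2n}\to S^p(\mathfrak h_{\ge1}[V_m])\otimes\mathfrak h[V_m]$ occurring in the definition \eqref{eqn_mainmap} of $\Phi^m$ is $O(m)$-equivariant and surjective onto each multidegree component, hence surjective onto $O(m)$-invariants since $\mathbb K$ has characteristic zero, while the $O(m)$-invariants of $(V_m^*)^{\otimes 2n}$ are spanned by the $\beta_c$. For injectivity the key point is that, since $\{\beta_c\}_{c\in\chord{n}}$ is a \emph{basis} of $[(V_m^*)^{\otimes 2n}]^{O(m)}$ when $m\ge n$, two graphs $\Gamma_{k_1,\dots,k_p;k}(c)$ and $\Gamma_{k'_1,\dots,k'_q;k}(c')$ have the same image under $\Phi^m$ precisely when $c'$ is obtained from $c$ by a permutation of $\{1,\dots,2n\}$ that preserves the partition into blocks of sizes $k_1,\dots,k_p$ together with $k$ singletons — that is, a permutation within blocks, a permutation of internal blocks of equal size, and a permutation of the external half-edges. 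This wreath-product symmetry group is exactly the group of relabellings realising isomorphisms of graphs with the given vertex structure, so $\Phi^m$ separates isomorphism classes; comparing with surjectivity (or counting) gives bijectivity.

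For (ii) one computes directly: if $\Gamma_i=\Gamma_{\vec k_i;k_{0,i}}(c_i)$ for $i=1,2$, then $\Gamma_1\sqcup\Gamma_2=\Gamma_{\vec k_1,\vec k_2;k_{0,1}+k_{0,2}}(c_1\sqcup c_2)$ for the side-by-side concatenation $c_1\sqcup c_2$ of chord diagrams, and $\beta_{c_1\sqcup c_2}=\beta_{c_1}\otimes\beta_{c_2}$ under the obvious identification. Applying the grouping map, the internal-vertex factors of $\Gamma_1$ and $\Gamma_2$ are concatenated — which is the multiplication $\mu_1$ on $\widehat S(\widehat{\mathfrak h}_{\ge1}[V])$ — and the external factors are multiplied together — which is $\mu_2$ on $\widehat{\mathfrak h}[V]$ — so $\Phi(\Gamma_1\sqcup\Gamma_2)=\mu\bigl(\Phi(\Gamma_1),\Phi(\Gamma_2)\bigr)$. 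Since this identity holds on the dense subspace of $\widehat{\mathcal H}\cotimes\widehat{\mathcal H}$ spanned by pairs of genuine graphs and both sides are continuous of bidegree zero, it holds everywhere; together with $\Phi(\mathbf 1)=1$ this shows $\Phi$ is a morphism of algebras.

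For (iii), which I expect to be the main obstacle, recall that $\Delta$ on $\widehat{\mathcal H}\cong\widehat S(\widehat{\mathcal H}_c)$ is the cocommutative coproduct for which connected graphs are primitive, so a graph $\Gamma$ with connected components $\Gamma_1,\dots,\Gamma_r$ satisfies $\Delta(\Gamma)=\sum_{S\subseteq\{1,\dots,r\}}\bigl(\bigsqcup_{i\in S}\Gamma_i\bigr)\otimes\bigl(\bigsqcup_{i\notin S}\Gamma_i\bigr)$. On the other side, the $(m,n)$-component of $\delta\bigl(\Phi(\Gamma)\bigr)$ is obtained by applying $\delta_1\cotimes\delta_2$ to $\Phi^{m+n}(\Gamma)$ and then $\pi_m\cotimes\pi_n$. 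Since $\delta_1$ is primitive on the vertex-polynomial generators of $\widehat S(\widehat{\mathfrak h}_{\ge1}[V])$ and $\delta_2$ is primitive on $V^*$, applying $\delta_1\cotimes\delta_2$ amounts to summing over all distributions of the vertex set $\vint{\Gamma}\cup\vext{\Gamma}$ into two parts $A\sqcup B$, and for each edge $e$ of $\Gamma$ the two legs $x_a^*\otimes x_a^*$ of the associated form $\sum_{a=1}^{m+n}x_a^*\otimes x_a^*$ end up on opposite sides of $A\sqcup B$ exactly when $e$ joins $A$ to $B$. Now $\pi_m$ annihilates $x_a^*$ for $a>m$ and $\pi_n$ annihilates $x_a^*$ for $a\le m$; hence any term in which some edge joins $A$ to $B$ is killed by $\pi_m\cotimes\pi_n$, while an edge lying inside $A$ (resp. inside $B$) contributes $\sum_{a\le m}x_a^*\otimes x_a^*$ (resp. $\sum_{a>m}x_a^*\otimes x_a^*$), namely the canonical form on $V_m$ (resp. on $V_n$ after reindexing). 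Thus only the partitions $A\sqcup B$ with no crossing edge survive, these are precisely the partitions of the set of connected components of $\Gamma$, and the term for $S$ is $\Phi^m\bigl(\bigsqcup_{i\in S}\Gamma_i\bigr)\otimes\Phi^n\bigl(\bigsqcup_{i\notin S}\Gamma_i\bigr)$. Assembling over all $m,n$ yields $\delta\bigl(\Phi(\Gamma)\bigr)=(\Phi\cotimes\Phi)\bigl(\Delta(\Gamma)\bigr)$. The delicate points here are keeping careful track of how $\delta_2$ distributes the external vertices and external edges and of the index bookkeeping in $\pi_m\cotimes\pi_n$; together with the combinatorial identification in step (i) these are where the real work lies.
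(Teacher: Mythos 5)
Your proposal is correct, and it rests on the same three pillars as the paper's proof: Lemma \ref{lem_invariants} together with the naturality \eqref{eqn_invinc} for bijectivity, concatenation of chord diagrams for compatibility with the products, and the fact that $\pi_m\cotimes\pi_n$ annihilates $\sum_{a=1}^{m+n}x_a^*\otimes x_a^*$ for compatibility with the coproducts. Two of your steps do take a genuinely different route. For bijectivity the paper does not count orbits: it writes down an explicit two-sided inverse $\Psi^n(\mathbf f)=\sum_{c\in\chord{N}}\tilde{\mathbf f}(z_c)\,\Gamma_{k_1,\ldots,k_m;k_0}(c)$ built from the dual basis of coinvariants $\{z_c\}$ and the pairing \eqref{eqn_invpair}, checks it is independent of the representative $\tilde{\mathbf f}$ (your wreath-product symmetry appears there, but only to verify well-definedness), and assembles $\Psi=\lim_n\Psi^n\pi_n$; your observation that the transition maps $\mathfrak l^{m+1}_{nk}\to\mathfrak l^m_{nk}$ stabilize for $m\geq n$ plays the role of the paper's remark that this sequence is Cauchy. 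Your orbit-counting alternative is sound in characteristic zero (the coinvariants of a permuted basis under the finite block group have the orbits as a basis, and this commutes with taking $O(m)$-invariants), though you should make explicit that $\Phi^m$ therefore carries the basis of isomorphism classes to a linearly independent set, rather than leaving it at ``comparing with surjectivity.'' For the coproduct, the paper first reduces to connected graphs --- since $\Phi$ is an algebra isomorphism and $\widehat{\mathcal H}$ is formally generated by its primitives, it suffices to show that primitives map to primitives --- and then kills the cross terms by exactly your $\pi_m\cotimes\pi_n$ argument; your direct computation for an arbitrary graph (only partitions of the set of connected components survive) proves the same identity without that reduction, at the cost of the extra bookkeeping over external vertices that you rightly flag. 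Both routes are valid, and the mechanisms that make them work are the same.
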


\begin{proof}
By Lemma \ref{lem_invariants}, item \eqref{item_invariant}; every tensor $\mathbf{f}\in\mathfrak{l}^n_{N\bullet}$ is a (finite) sum of elements of the form
\[ [\beta_c]_{k_1,\ldots,k_m;k_0} \]
where $c\in\chord{N}$ is a chord diagram and $\sum_{i=0}^m k_i=2N$. It follows that the map $\Phi^n:\widehat{\mathcal{H}}\to\mathfrak{l}^n$ is surjective for all $n$.

We begin by defining a family of maps $\Psi^n:\mathfrak{l}^n\to\widehat{\mathcal{H}}$ as follows. Given $\mathbf{f}\in\mathfrak{l}^n_{N\bullet}$ we may assume, by the preceding observation, that in defining $\Psi^n(\mathbf{f})$ we have
\[ \mathbf{f}=[\tilde{\mathbf{f}}]_{k_1,\ldots,k_m;k_0},\quad\tilde{\mathbf{f}}\in\left[(V_n^*)^{\otimes 2N}\right]^{O(V_n)}. \]
We then define $\Psi^n$ by
\begin{equation} \label{eqn_inverse}
\Psi^n(\mathbf{f}) = \left\{\begin{array}{rl}\sum_{c\in\chord{N}}\tilde{\mathbf{f}}(z_c)\Gamma_{k_1,\ldots,k_m;k_0}(c), & \text{if }  N\leq n \\ 0, & \text{otherwise} \end{array}\right\}.
\end{equation}
The map $\Psi^n$ is well-defined as if $\tilde{\mathbf{g}}\in\left[(V_n^*)^{\otimes 2N}\right]^{O(V_n)}$ is a another representative for $\mathbf{f}$
\[ [\tilde{\mathbf{g}}]_{k'_1,\ldots,k'_m;k'_0}=\mathbf{f}=[\tilde{\mathbf{f}}]_{k_1,\ldots,k_m;k_0} \]
we must have $k'_0=k_0$, $k'_i=k_{\tau(i)}$ for some permutation $\tau\in\mathbb{S}_m$ and $\tilde{\mathbf{g}}$ differs from $\tilde{\mathbf{f}}$ by some permutation $\sigma$ corresponding to the symmetries of the tensor $\mathbf{f}$. However, this symmetry $\sigma$ corresponds precisely to a symmetry of the terms $\Gamma_{k_1,\ldots,k_m;k_0}(c)$ which appear in the sum \eqref{eqn_inverse}. Hence the definition of $\Psi^n(\mathbf{f})$ does not depend upon this choice of representative.

We may now define a map $\Psi:\mathfrak{l}^\infty\to\widehat{\mathcal{H}}$ as follows. If $\pi_n:\mathfrak{l}^\infty\to\mathfrak{l}^n$ denotes the canonical projection from the inverse limit onto $\mathfrak{l}^n$, we define $\Psi$ by the formula
\[ \Psi(\mathbf{f}) := \lim_{n\to\infty} \Psi^n\pi_n(\mathbf{f}). \]
Note that it follows from the definition of $\Psi^n$ and Equation \eqref{eqn_invinc} that the sequence $\Psi^n\pi_n(\mathbf{f})$ is Cauchy and hence convergent.

It follows from Equation \eqref{eqn_invpair} that $\Psi\circ\Phi=\id$. Since, as we have already observed, the map $\Phi^n:\widehat{\mathcal{H}}\to\mathfrak{l}^n$ is surjective for all $n$, it follows that $\Phi\circ\Psi=\id$. Hence we have established that $\Phi$ is bijective.

To prove that $\Phi$ preserves the bialgebra structure, let us begin by establishing that $\Phi$ preserves multiplication. Suppose that
\[ \Gamma_1=\Gamma_{k_1,\ldots,k_{m_1};k_0}(c_1) \quad\text{and}\quad \Gamma_2=\Gamma_{k'_1,\ldots,k'_{m_2};k'_0}(c_2) \]
are two graphs with $N_1$ and $N_2$ edges respectively. We shall define a chord diagram $c_1\sqcup c_2$ such that
\begin{equation} \label{eqn_chordunion}
\Gamma_1\sqcup\Gamma_2=\Gamma_{k_1,\ldots,k_{m_1},k'_1,\ldots,k'_{m_2};k_0+k'_0}(c_1\sqcup c_2).
\end{equation}
We may write the chord diagrams $c_1$ and $c_2$ as
\[ c_1=\{i_1,j_1\},\ldots,\{i_{N_1},j_{N_1}\} \quad\text{and}\quad c_2=\{i'_1,j'_1\},\ldots,\{i'_{N_2},j'_{N_2}\}. \]
Define
\[ K:=\sum_{i=1}^{m_1} k_i \quad\text{and}\quad K':=\sum_{i=1}^{m_2} k'_i \]
and define the permutation $\sigma\in\mathbb{S}_{2N_1+2N_2}$ as the $K'$-fold $(k_0+K')$-cycle
\[ \sigma:=(K+1,\cdots,2N_1,2N_1+1,\cdots,2N_1+K')^{K'} \]
which swaps the blocks $\{K+1,\ldots,2N_1\}$ and $\{2N_1+1,\ldots,2N_1+K'\}$. Now define $c_1\sqcup c_2\in\chord{N_1+N_2}$ to be the image under the permutation $\sigma$ of the chord diagram
\[ \{i_1,j_1\},\ldots,\{i_{N_1},j_{N_1}\},\{2N_1+i'_1,2N_1+j'_1\},\ldots,\{2N_1+i'_{N_2},2N_1+j'_{N_2}\} \]
so that \eqref{eqn_chordunion} holds. From this we see that
\begin{displaymath}
\begin{split}
\Phi^n(\Gamma_1\sqcup\Gamma_2) &= [\beta_{c_1\sqcup c_2}]_{k_1,\ldots,k_{m_1},k'_1,\ldots,k'_{m_2};k_0+k'_0} \\
&= [\sigma\cdot(\beta_{c_1}\otimes\beta_{c_2})]_{k_1,\ldots,k_{m_1},k'_1,\ldots,k'_{m_2};k_0+k'_0} \\
&= \mu([\beta_{c_1}]_{k_1,\ldots,k_{m_1};k_0}, [\beta_{c_2}]_{k'_1,\ldots,k'_{m_2};k'_0}) \\
&= \mu(\Phi^n(\Gamma_1),\Phi^n(\Gamma_2)).
\end{split}
\end{displaymath}

Finally, we must show that $\Phi$ preserves the comultiplications. Since $\widehat{\mathcal{H}}$ is (formally) generated by primitive elements, i.e. connected graphs, it is sufficient to show that $\Phi$ maps primitive elements to primitive elements. Consider the vector space $V_{m+n}$ and let $\{x_i^*\}_{i=1}^{m+n}$ be the basis of $V_{m+n}^*$ which is dual to the basis $\{x_i\}_{i=1}^{m+n}$ of $V_{m+n}$. We may write the bilinear form \eqref{eqn_canonicalform} on $V_{m+n}$ as
\[ \innprod = \sum_{i=1}^{m+n} x_i^*\otimes x_i^*. \]
Consider the projections
\[ \pi_m:V_{m+n}^*\to V_m^* \quad\text{and}\quad \pi_n:V_{m+n}^*\to V_n^* \]
defined in Figure \eqref{eqn_projections}. Note that
\begin{align}
\label{eqn_projvanishone} \pi_n(x_i^*)=0, & \quad 1\leq i\leq m & \pi_n(x_{m+i}^*)=x_i^*, & \quad 1\leq i\leq n \\
\label{eqn_projvanishtwo} \pi_m(x_i^*)=x_i^*, & \quad 1\leq i\leq m & \pi_m(x_{m+i}^*)=0, & \quad 1\leq i\leq n
\end{align}
To prove that $\Phi$ maps primitive elements to primitive elements, we must show that
\begin{equation} \label{eqn_primtoprim}
\delta\Phi^{m+n}(\Gamma) = \Phi^m(\Gamma)\otimes\mathbf{1}+\mathbf{1}\otimes\Phi^n(\Gamma)
\end{equation}
for all connected graphs $\Gamma$. Now
\[ \delta\Phi^{m+n}(\Gamma) = (\pi_m\cotimes\pi_n)\left[\Phi^{m+n}(\Gamma)\otimes\mathbf{1} + \mathbf{1}\otimes\Phi^{m+n}(\Gamma) + \sum_j Y_j\otimes Y'_j\right], \]
where $\sum_j Y_j\otimes Y'_j$ is a (finite) sum of tensors where both $Y_j$ and $Y'_j$ lie in $\mathfrak{l}^{m+n}_{\geq 1,\bullet}$. Since the graph $\Gamma$ is connected, this sum will be a sum of terms of the form
\begin{displaymath}
\begin{split}
\sum_{i=1}^m\left[a_1\cdots a_{p_1}\cdot x_i^*\cdot b_1\cdots b_{q_1}\right]_{k_1,\ldots,k_{l_1};k_0} & \otimes\left[a'_1\cdots a'_{p_2}\cdot x_i^*\cdot b'_1\cdots b'_{q_2}\right]_{k'_1,\ldots,k'_{l_2};k'_0} \\
& + \\
\sum_{j=1}^n\left[a_1\cdots a_{p_1}\cdot x_{m+j}^*\cdot b_1\cdots b_{q_1}\right]_{k_1,\ldots,k_{l_1};k_0} & \otimes\left[a'_1\cdots a'_{p_2}\cdot x_{m+j}^*\cdot b'_1\cdots b'_{q_2}\right]_{k'_1,\ldots,k'_{l_2};k'_0}
\end{split}
\end{displaymath}
where $a_i,a'_i,b_i,b'_i\in V_{m+n}^*$. It follows from \eqref{eqn_projvanishone} that the first sum vanishes after applying $(\pi_m\cotimes\pi_n)$, and from \eqref{eqn_projvanishtwo} that the second sum vanishes after applying $(\pi_m\cotimes\pi_n)$. Hence
\[ (\pi_m\cotimes\pi_n)\left[\sum_j Y_j\otimes Y'_j\right]=0, \]
from which \eqref{eqn_primtoprim} follows.
\end{proof}

\subsection{The main theorem}

Now we may begin to compare the primitive elements of these two Hopf algebras. Let us denote the primitive elements of the bialgebra $\mathfrak{l}^\infty$ by $\mathcal{P}\mathfrak{l}^\infty$. Since by Remark \ref{rem_connprim} the space of primitive elements of $\widehat{\mathcal{H}}$ is $\widehat{\mathcal{H}}_c$, it follows from Theorem \ref{thm_bialgiso} that $\Phi$ induces an isomorphism
\[ \Phi:\widehat{\mathcal{H}}_c\to\mathcal{P}\mathfrak{l}^\infty. \]

\begin{prop} \label{prop_mainiso}
$\Phi$ induces an isomorphism
\[ \Phi:\widehat{\mathcal{H}}_c^+ \to \mathcal{P}\mathfrak{l}^\infty_+ \]
where $\mathcal{P}\mathfrak{l}^\infty_+:=\mathcal{P}\mathfrak{l}^\infty\cap\mathfrak{l}^\infty_+$.
\end{prop}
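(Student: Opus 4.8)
The plan is to read off this refinement from the isomorphism $\Phi\colon\widehat{\mathcal{H}}_c\to\mathcal{P}\mathfrak{l}^\infty$ just established, using that $\Phi$ has bidegree zero for the gradings \eqref{eqn_graphgrade} and \eqref{eqn_liegrading}, as recorded in \eqref{eqn_mapgrade}. First I would observe that, because $\Phi$ is bijective and carries $\mathcal{H}_{n\bullet k}$ into $\mathfrak{l}^\infty_{nk}$, it restricts in every bidegree to an isomorphism $\mathcal{H}_{c;n,\bullet,k}\xrightarrow{\sim}\mathcal{P}\mathfrak{l}^\infty\cap\mathfrak{l}^\infty_{nk}$: a homogeneous element of $\mathfrak{l}^\infty_{nk}$ has a preimage whose components in the other bidegrees must vanish, by injectivity of $\Phi$. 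Recalling that $\mathfrak{l}^\infty_+=\prod_{n>k\geq 0}\mathfrak{l}^\infty_{nk}$, taking the product of these isomorphisms over the bidegrees with $n>k$ then identifies $\prod_{n>k}\mathcal{H}_{c;n,\bullet,k}$ with $\mathcal{P}\mathfrak{l}^\infty\cap\mathfrak{l}^\infty_+=\mathcal{P}\mathfrak{l}^\infty_+$. At that point the proposition reduces to the combinatorial claim that $\widehat{\mathcal{H}}_c^+=\prod_{n>k}\mathcal{H}_{c;n,\bullet,k}$; equivalently, that \emph{a connected graph has at least one internal edge exactly when its number of edges strictly exceeds its number of external vertices.}

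To prove this claim I would count half-edges. Since external vertices are $1$-valent, each external vertex of a graph $\Gamma$ supplies exactly one half-edge, and that half-edge lies in exactly one edge; so I classify the edges of $\Gamma$ as internal ($I$ of them, containing no such half-edge), singly external ($E_1$ of them, containing one), or doubly external ($E_2$ of them, containing two). The number $N$ of edges of $\Gamma$ is then $I+E_1+E_2$ and its number $k_0$ of external vertices is $E_1+2E_2$, so $N-k_0=I-E_2$. Now a doubly external edge joins two $1$-valent vertices and so is a connected component of $\Gamma$ all by itself; hence if $\Gamma$ is connected it is either that single edge --- in which case $I=0$ and $N=1<2=k_0$ --- or else $E_2=0$ and $N-k_0=I$. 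In both cases the conditions $I\geq 1$ and $N>k_0$ are equivalent, which is the claim.

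Assembling the pieces: $\widehat{\mathcal{H}}_c^+$ is by definition the closed span of the connected graphs with at least one internal edge, which the combinatorial claim identifies with $\prod_{n>k}\mathcal{H}_{c;n,\bullet,k}$, and the first paragraph shows $\Phi$ maps this isomorphically onto $\mathcal{P}\mathfrak{l}^\infty_+$. I expect the one point needing real care to be the combinatorial claim, and within it the use of connectedness to exclude edges joining two external vertices; the surrounding argument is just bookkeeping with the bidegree, together with the by-now-routine finite-type technicalities needed to interchange the relevant direct sums and products.
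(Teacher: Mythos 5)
Your proof is correct and follows essentially the same route as the paper: restrict the bialgebra isomorphism bidegree by bidegree using \eqref{eqn_mapgrade}, then identify $\widehat{\mathcal{H}}_c^+$ with $\prod_{n>k}\mathcal{H}_{c;n,\bullet,k}$ via the observation that a connected graph has an internal edge exactly when its edge count exceeds its number of external vertices. The only difference is that you supply the half-edge counting argument (including the exclusion of doubly external edges by connectedness) for that combinatorial fact, which the paper merely asserts.
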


\begin{proof}
By Equation \eqref{eqn_bialbigrad} it follows that $\mathcal{P}\mathfrak{l}^\infty_+$ is a bigraded subspace of $\mathfrak{l}^\infty_+$. Since by Equation \eqref{eqn_mapgrade}, the map $\Phi$ has bidegree zero, it follows that $\Phi$ induces an isomorphism
\[ \Phi:\prod_{n>k} \mathcal{H}_{c;n\bullet k} \to \mathcal{P}\mathfrak{l}^\infty_+. \]
Now observe that a connected graph has no internal edges if and only if the number $n$ of its edges does not exceed the number $k$ of its external vertices. From this it follows that $\widehat{\mathcal{H}}_c^+=\prod_{n>k} \mathcal{H}_{c;n\bullet k}$.
\end{proof}

We are nearly ready to formulate our main theorem. Having defined pre-Lie structures on the spaces $\mathfrak{l}^n_+$, we require the following lemma to extend this structure to $\mathfrak{l}^\infty_+$.

\begin{lemma}
The following diagram commutes;
\[ \xymatrix{ \mathfrak{l}_+^{n+1}\cotimes\mathfrak{l}_+^{n+1} \ar[r]^\circ \ar[d] & \mathfrak{l}_+^{n+1} \ar[d] \\ \mathfrak{l}_+^n\cotimes\mathfrak{l}_+^n \ar[r]^\circ & \mathfrak{l}_+^n } \]
where the vertical maps are the canonical projections.
\end{lemma}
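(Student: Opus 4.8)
The plan is to deduce the lemma from the more general assertion that any isometric linear embedding $j\colon V\hookrightarrow W$ of finite-dimensional inner product spaces induces a morphism of pre-Lie algebras $\mathfrak{l}_+[W]\to\mathfrak{l}_+[V]$; the vertical maps of the diagram are the instance $j=i\colon V_n\hookrightarrow V_{n+1}$. Recall that this induced map $p_n\colon\mathfrak{l}^{n+1}_+\to\mathfrak{l}^n_+$ is obtained by applying the pullback-of-functions map $i^*$ (on $V^*$: $x^*_{n+1}\mapsto 0$ and $x^*_a\mapsto x^*_a$ for $a\leq n$) to each of the tensor factors $\widehat{S}(\widehat{\mathfrak{h}}_{\geq1}[-])$ and $\widehat{\mathfrak{h}}[-]$, and then restricting to invariants along the embedding $O(V_n)\hookrightarrow O(V_{n+1})$ given by the orthogonal splitting $V_{n+1}=V_n\oplus\langle x_{n+1}\rangle$.

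First I would reduce the claim to a finite-dimensional, bidegree-by-bidegree statement. Both $\circ$ and $p_n$ respect the bigrading \eqref{eqn_liegrading}: $p_n$ is applied factor-by-factor and so preserves the $V^*$-tensor degree, while $\circ$ sends $\mathfrak{l}_{n_1k_1}\cotimes\mathfrak{l}_{n_2k_2}$ into $\mathfrak{l}_{n_1+n_2-k_2,\,k_1}$ (the bidegree bookkeeping recorded when $\circ$ was defined). It therefore suffices to verify $p_n(\alpha\circ\beta)=p_n(\alpha)\circ p_n(\beta)$ on each bigraded summand, where everything is finite-dimensional and the convergence questions are moot; the identity may moreover be checked before passing to invariants, on the component subspaces of $\bigl[\widehat{S}(\widehat{\mathfrak{h}}_{\geq1}[-])\cotimes\widehat{\mathfrak{h}}[-]\bigr]_+$.

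Next I would unwind $\circ$ through its defining commutative square: transport by $\id\cotimes TD$ to polynomial vector fields, compose via \eqref{eqn_polyprelie}, transport back. Composition of vector fields is built from two operations, the symmetric multiplication of coefficient functions in $\widehat{S}(\widehat{\mathfrak{h}}_{\geq1}[-])$ and the directional derivative that contracts a coefficient against the direction part of the other field. Since $i^*$ is an algebra homomorphism on the coefficient factor, it is automatically compatible with the first. For the second, the one thing to check is that $i^*$ is compatible with the isomorphisms $D$ and $T$: writing $\mathfrak{h}[V]^*\cong\widehat{S}(V)$, one verifies that $TD$ intertwines the pullback $i^*\colon\widehat{\mathfrak{h}}[V_{n+1}]\to\widehat{\mathfrak{h}}[V_n]$ with the map $\widehat{S}(V_{n+1})\to\widehat{S}(V_n)$ induced by the \emph{orthogonal projection} $\mathrm{pr}\colon V_{n+1}\to V_n$. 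This holds because $\mathrm{pr}$ is the transpose of $i$ with respect to the canonical forms: on generators it is $TD(i^*x^*_a)=\mathrm{pr}(TD\,x^*_a)$, immediate from $D(x^*_a)=\langle x^*_a,-\rangle^{-1}$ and the definition of the inverse form, and it extends to all degrees because $D$ and $T$ act factorwise up to combinatorial factors common to both sides. Equivalently, the contractions implicit in $\circ$ are mediated by the inverse form $\innprod^{-1}$, i.e.\ by the copairing $\sum_{a=1}^{n+1}x_a\otimes x_a$, and $\mathrm{pr}\otimes\mathrm{pr}$ carries this to $\sum_{a=1}^{n}x_a\otimes x_a$, the copairing for $V_n$ — exactly what the identity $p_n(\alpha\circ\beta)=p_n(\alpha)\circ p_n(\beta)$ demands.

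The main obstacle, and the only place real care is needed, is precisely this change of direction: $p_n$ is a \emph{pullback} of functions on the coefficient factor but, read through $TD$ on the direction factor, it is a \emph{push-forward} along $\mathrm{pr}=i^{\mathrm{T}}$, and one must confirm that the bilinear formula \eqref{eqn_polyprelie} is consistent with this mixed behaviour. That it is, rests on the same mechanism — the interplay of an orthogonal inclusion, its transpose projection, and the passage to $O(V)$-invariants — that made the comultiplication $\delta$ on $\mathfrak{l}^\infty$ compatible with the inverse system \eqref{eqn_teninvsys}. Once the identity holds on every non-invariant bigraded component, restricting to $O(V_n)$-invariants yields the commuting square of the lemma, and passing to the inverse limit then furnishes a well-defined pre-Lie structure on $\mathfrak{l}^\infty_+$.
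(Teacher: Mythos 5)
There is a genuine gap. You reduce the identity $p_n(\alpha\circ\beta)=p_n(\alpha)\circ p_n(\beta)$ to a check ``before passing to invariants, on the component subspaces of $\bigl[\widehat{S}(\widehat{\mathfrak{h}}_{\geq1}[V])\cotimes\widehat{\mathfrak{h}}[V]\bigr]_+$'', but on the non-invariant ambient space the identity is simply false, and the $O(V)$-invariance that you defer to the very end is the whole content of the lemma. The contraction in \eqref{eqn_polyprelie} pairs a factor of the coefficient of $\alpha$ (in $\widehat{\mathfrak{h}}_{\geq1}[V_{n+1}]$) against the direction of $\beta$ (in $\widehat{\mathfrak{h}}[V_{n+1}]$, transported by $TD$); \emph{both} slots are hit by $i^*$, which kills $x_{n+1}^*$, while $\langle x_{n+1}^*,x_{n+1}^*\rangle^{-1}=1$, so the pairing does not commute with the projection. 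Concretely, take $\alpha=(x_{n+1}^*x_{n+1}^*)\otimes 1$ (bidegree $(1,0)$, hence in the $+$ part) and $\beta=(x_1^*x_1^*x_1^*x_1^*)\otimes(x_{n+1}^*x_{n+1}^*)$ (bidegree $(3,2)$). Then $\alpha\circ\beta=2\,(x_1^*x_1^*x_1^*x_1^*)\otimes 1$, which survives the projection to level $n$, whereas $p_n(\alpha)=0$ and hence $p_n(\alpha)\circ p_n(\beta)=0$. Your observation that $\mathrm{pr}\otimes\mathrm{pr}$ carries the copairing $\sum_{a=1}^{n+1}x_a\otimes x_a$ to $\sum_{a=1}^{n}x_a\otimes x_a$ is correct, but it only applies when the contracted slots are themselves occupied by halves of the canonical form --- that is, precisely for the invariant tensors $[\beta_c]_{k_1,\ldots,k_m;k_0}$, where contracting two form-decorated slots produces a new form-decorated ``edge'' and Equation \eqref{eqn_invinc} then gives compatibility with $i^*$. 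Your appeal to ``the same mechanism'' as the compatibility of $\delta$ with the inverse system leaves exactly this crux unproved, and it cannot be proved in the generality you claim.

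For comparison, the paper sidesteps the tensor-level computation entirely: by Lemma \ref{lem_invariants} the map $\Phi^{n+1}$ is surjective, so every element of $\mathfrak{l}_+^{n+1}$ is $\Phi^{n+1}(\Gamma)$ for some graph $\Gamma$; the maps satisfy $\Phi^n=p_n\circ\Phi^{n+1}$ by \eqref{eqn_invinc}; and Equation \eqref{eqn_prelieidentity}, established in the proof of Theorem \ref{thm_prelieiso}, gives $\Phi^k(\Gamma_1\circ\Gamma_2)=\Phi^k(\Gamma_1)\circ\Phi^k(\Gamma_2)$ for every $k$. Chasing these three facts around the square proves the lemma. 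If you want a direct argument instead, you must first invoke the chord-diagram description of the invariants and perform the contraction on the $\beta_c$'s --- which is in substance the computation the paper carries out in Theorem \ref{thm_prelieiso}.
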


\begin{proof}
This follows from Equation \eqref{eqn_prelieidentity} which is proved in the course of the next theorem and the fact that the map $\Phi^n:\widehat{\mathcal{H}}\to\mathfrak{l}^n$ is surjective.
\end{proof}

Hence these pre-Lie structures give rise to a map
\[ \circ:\mathfrak{l}^\infty_+\cotimes\mathfrak{l}^\infty_+\to\mathfrak{l}^\infty_+ \]
describing a pre-Lie structure on $\mathfrak{l}^\infty_+$. Our main theorem that follows implies that $\mathcal{P}\mathfrak{l}^\infty_+$ is a pre-Lie subalgebra of $\mathfrak{l}^\infty_+$.

\begin{theorem} \label{thm_prelieiso}
The map
\[ \Phi:\widehat{\mathcal{H}}_c^+\to\mathcal{P}\mathfrak{l}^\infty_+ \]
is an isomorphism of pre-Lie algebras.
\end{theorem}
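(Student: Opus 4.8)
The plan is to reduce everything to one identity at finite level and then unwind the definition \eqref{eqn_polyprelie} of the pre-Lie bracket. Since Proposition \ref{prop_mainiso} already provides a \emph{linear} isomorphism $\Phi\colon\widehat{\mathcal{H}}_c^+\to\mathcal{P}\mathfrak{l}^\infty_+$, all that remains is that $\Phi$ intertwines the two pre-Lie products; and since $\Phi$ is continuous and linear and $\widehat{\mathcal{H}}_c^+$ is topologically spanned by connected graphs with at least one internal edge, it is enough to establish, for every $n$ and all connected $\Gamma_1,\Gamma_2\in\mathcal{H}_c^+$, the identity
\begin{equation} \label{eqn_prelieidentity}
\Phi^n(\Gamma_1\circ\Gamma_2)=\Phi^n(\Gamma_1)\circ\Phi^n(\Gamma_2),
\end{equation}
the bracket on the right being the pre-Lie structure on $\mathfrak{l}^n_+$. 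Note that \eqref{eqn_prelieidentity}, together with the compatibility of the maps $\Phi^n$ with the inverse system \eqref{eqn_invlim} (a consequence of \eqref{eqn_invinc}) and the surjectivity of $\Phi^{n+1}$, is exactly what is needed to prove the Lemma preceding this theorem, so that the bracket on $\mathfrak{l}^\infty_+$ is well defined; passing to the inverse limit in \eqref{eqn_prelieidentity} then gives $\Phi(\Gamma_1\circ\Gamma_2)=\Phi(\Gamma_1)\circ\Phi(\Gamma_2)$, and since the left-hand side lies in $\Phi(\widehat{\mathcal{H}}_c^+)=\mathcal{P}\mathfrak{l}^\infty_+$ this simultaneously shows that $\mathcal{P}\mathfrak{l}^\infty_+$ is a pre-Lie subalgebra of $\mathfrak{l}^\infty_+$, as claimed in the paragraph before the statement.

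To prove \eqref{eqn_prelieidentity} I would choose chord-diagram representatives $\Gamma_i=\Gamma_{k_1^{(i)},\ldots;k_0^{(i)}}(c_i)$, so that $\Phi^n(\Gamma_i)=[\beta_{c_i}]_{k_1^{(i)},\ldots;k_0^{(i)}}$ as in \eqref{eqn_mainmap}, transport to polynomial vector fields on $\widehat{\mathfrak{h}}[V_n]$ via the isomorphisms $D$ and $T$, and apply \eqref{eqn_polyprelie}. Under this dictionary the $\widehat{S}(\widehat{\mathfrak{h}}_{\geq 1}[V_n])$-factors of $\Phi^n(\Gamma_1)$ correspond to the internal vertices of $\Gamma_1$ (each carrying the grading ``order $=$ valency''), while the $\widehat{\mathfrak{h}}[V_n]$-factor of $\Phi^n(\Gamma_2)$ corresponds to the external vertices of $\Gamma_2$; the contractions inside $\beta_{c_1}$ and $\beta_{c_2}$ record the edges. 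In \eqref{eqn_polyprelie} the derivative picks out precisely one of the internal-vertex factors of $\Phi^n(\Gamma_1)$ -- producing the sum over $v\in\vint{\Gamma_1}$ -- and differentiates it in the ``direction'' supplied by the external part of $\Phi^n(\Gamma_2)$, a contraction which is nonzero only when the valency of $v$ equals the number of external edges of $\Gamma_2$, matching $\Gamma_1\circ_v\Gamma_2=0$ otherwise. This contraction is effected through the inverse form $\innprod^{-1}$ built into $D$, which cancels against the copies of $\innprod$ carried by the $\beta_{c_i}$; summing over the ways to perform it reproduces the sum over the bijections $\sigma$ in the definition of $\Gamma_1\circ\Gamma_2$. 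Using Lemma \ref{lem_invariants} and the duality \eqref{eqn_invpair}, together with the explicit recipe for the half-edges, vertices and edges of $\Gamma_1\circ_{v,\sigma}\Gamma_2$ (the edges of $\Gamma_1$ and the internal edges of $\Gamma_2$ survive, the external edges of $\Gamma_2$ being spliced onto the edges incident to $v$ according to $\sigma$; the internal vertices become $(\vint{\Gamma_1}\setminus\{v\})\sqcup\vint{\Gamma_2}$ and the external vertices remain $\vext{\Gamma_1}$), one identifies the $(v,\sigma)$-summand of $\Phi^n(\Gamma_1)\circ\Phi^n(\Gamma_2)$ with $[\beta_{c(v,\sigma)}]_{\ldots}=\Phi^n(\Gamma_1\circ_{v,\sigma}\Gamma_2)$, where $c(v,\sigma)$ is the chord diagram of the inserted graph. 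Summing over $v$ and $\sigma$ then produces the left-hand side of \eqref{eqn_prelieidentity}.

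The routine loose ends are: independence of the chosen chord-diagram representative, which goes exactly as in the proof of Theorem \ref{thm_bialgiso}, the residual symmetries of the tensor matching the symmetries of the graphs $\Gamma_1\circ_{v,\sigma}\Gamma_2$ appearing in the sum; and convergence of the expressions involved, which is supplied by the bidegree estimates for $\circ$ established immediately before the theorem. I expect the genuine work to be the bookkeeping of the previous paragraph: tracking indices across the identifications $D$ and $T$ and across the symmetrizations (the factors $1/k!$ and the passage to $\mathbb{S}_k$-coinvariants in the definitions of $T$ and of the maps $[-]_{k_1,\ldots;k_0}$), and verifying that the multiplicities produced by the product rule and by the choice of contracted index in \eqref{eqn_polyprelie} match, with no spurious automorphism factors, the cardinalities of the index sets $\vint{\Gamma_1}$ and $\Bij(\ldots)$ that govern the insertion sum. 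This is the same flavor of accounting already carried out for the commutative product in the proof of Theorem \ref{thm_bialgiso}, now with the pre-Lie product in place of disjoint union.
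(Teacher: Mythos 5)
Your proposal is correct and follows essentially the same route as the paper: both reduce to the finite-level identity $\Phi^n(\Gamma_1\circ\Gamma_2)=\Phi^n(\Gamma_1)\circ\Phi^n(\Gamma_2)$ on chord-diagram representatives and verify it by matching the contraction through the inverse form (the identity $\alpha=\sum_i\langle\alpha,x_i^*\rangle^{-1}x_i^*$, Equation \eqref{eqn_invinnprod}) against the splicing of external edges of $\Gamma_2$ into the vertex $v$ of $\Gamma_1$, with the symmetrization in $T$ producing the sum over bijections $\sigma$. The paper simply carries out the index bookkeeping you defer explicitly.
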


\begin{proof}
Let $x_1^*,\ldots, x_k^*$ denote the basis of $V_k^*$ which is dual to the basis $x_1,\ldots, x_k$ of $V_k$. Given a linear functional $\alpha\in V_k^*$, we have the identity
\begin{equation} \label{eqn_invinnprod}
\alpha = [\innprod\otimes\alpha]\circ[\id\otimes\innprod^{-1}] = \sum_{i=1}^k \langle \alpha,x_i^* \rangle^{-1} x_i^*.
\end{equation}

By Proposition \ref{prop_mainiso}, we need only prove that the map $\Phi$ preserves the pre-Lie structures. Let
\[ \Gamma_1:=\Gamma_{k_1,\ldots,k_n;k_0}(c_1) \quad\text{and}\quad \Gamma_2:=\Gamma_{l_1,\ldots,l_m;l_0}(c_2) \]
be two connected graphs. We may write the half-edges of $\Gamma_1$ as
\begin{equation} \label{eqn_halfedges}
h_{11},\ldots,h_{1k_1};h_{21},\ldots,h_{2k_2};\ldots;h_{n1},\ldots,h_{nk_n};h_{01},\ldots,h_{0k_0}.
\end{equation}
The $i$th internal vertex will be denoted by $v_i:=\{h_{i1},\ldots,h_{ik_i}\}$ and the $i$th external vertex will be denoted by $u_i:=\{h_{0i}\}$. Likewise, we denote by $h'_{ij}$, $v'_i$ and $u'_i$ the corresponding subobjects of $\Gamma_2$.

Every external vertex $u'_i$ of $\Gamma_2$ is attached through some external edge $e_i$ to some internal vertex $v'_{p(i)}$ of $\Gamma_2$. We may write this external edge as
\[ e_i=\{h'_{0i},h'_{p(i)q(i)}\} \]
where the half-edge $h'_{p(i)q(i)}$ is incident to $v'_{p(i)}$. Let us denote by $r(i)$ the position of this half-edge from the left as it appears in \eqref{eqn_halfedges}; for instance, if $h'_{p(i)q(i)}=h'_{12}$ then $r(i)=2$ and if $h'_{p(i)q(i)}=h'_{37}$ then $r(i)=l_1+l_2+7$. We may assume that
\[ r(1) < r(2) < \ldots < r(l_0). \]

We may describe the pre-Lie product of $\Gamma_1$ and $\Gamma_2$ by
\[ \Gamma_1\circ\Gamma_2 = \sum_{\begin{subarray}{c} 1\leq i\leq n: \\ l_0=k_i \end{subarray}}\sum_{\sigma\in\mathbb{S}_{l_0}} \Gamma_1\circ_{v_i,\sigma}\Gamma_2. \]
Consider the graph whose half-edges and vertices may be listed as
\begin{displaymath}
\begin{array}{rl}
\text{internal vertices} & \left\{\begin{array}{l} h'_{11},\ldots,h'_{1l_1}; \\ \qquad\vdots \\ h'_{m1},\ldots,h'_{ml_m}; \\ h_{11},\ldots,h_{1k_1}; \\ \qquad\vdots \\ \widehat{(h_{i1},\ldots,h_{ik_i})} \\ \qquad\vdots \\ h_{n1},\ldots,h_{nk_n};  \end{array}\right. \\
\text{external vertices} & \left\{\begin{array}{l} h_{01},\ldots,h_{0k_0} \end{array}\right.
\end{array}
\end{displaymath}
The graph $\Gamma_1\circ_{v_i,\sigma}\Gamma_2$ is obtained from this graph by replacing each half-edge $h'_{p(j),q(j)}$ of $\Gamma_2$ with the half-edge $h_{i,\sigma^{-1}(j)}$ from $\Gamma_1$. The edge structure of $\Gamma_1\circ_{v_i,\sigma}\Gamma_2$ is inherited from $\Gamma_1$ and $\Gamma_2$, that is to say that
\[ E(\Gamma_1\circ_{v_i,\sigma}\Gamma_2)\subset E(\Gamma_1) \cup E(\Gamma_2). \]

By definition, we have
\[ \Phi^k(\Gamma_1) = [\beta_{c_1}]_{k_1,\ldots,k_n;k_0} \quad\text{and}\quad \Phi^k(\Gamma_2) = [\beta_{c_2}]_{l_1,\ldots,l_m;l_0}. \]
We may write $\beta_{c_2}$ as a \emph{sum} of terms of the form
\[ \sum_{i_1,i_2,\ldots,i_{l_0}=1}^k (a_1 \cdots a_{r(1)-1}) x_{i_1}^* (a_{r(1)+1} \cdots a_{r(2)-1}) x_{i_2}^* a_{r(2)+1} \cdots a_{r(l_0)-1} x_{i_{l_0}}^* (a_{r(l_0)+1} \cdots a_{\sum_{j=1}^m l_j}) \cdot (x_{i_1}^* x_{i_2}^* \cdots x_{i_{l_0}}^*) \]
where each $a_i\in V_k^*$. Note that the left-most tensors of the form $x_{i_r}^*$ appear in the same location that the half-edges $h'_{p(r),q(r)}$ appear in \eqref{eqn_halfedges}.

If we write $\beta_{c_1}$ as a sum of tensors of the form
\[ (b_{11}\cdots b_{1k_1})\cdots (b_{i1}\cdots b_{ik_i})\cdots (b_{n1}\cdots b_{nk_n})\cdot (b_{01}\cdots b_{0k_0}) \]
where $b_{ij}\in V_k^*$, then we may write $\Phi^k(\Gamma_1\circ_{v_i,\sigma}\Gamma_2)$ as the corresponding sum of tensors of the form
\begin{equation} \label{eqn_tensorcompose}
\left[\mathbf{a}_1 b_{i\sigma^{-1}(1)} \mathbf{a}_2 b_{i\sigma^{-1}(2)} \mathbf{a}_3 \cdots \mathbf{a}_{l_0} b_{i\sigma^{-1}(l_0)} \mathbf{a}_{l_0+1} \cdot \mathbf{b}_1 \cdots \widehat{\mathbf{b}_i}\cdots \mathbf{b}_n\cdot \mathbf{b}_0\right]_{l_1,\ldots,l_m,k_1,\ldots,\hat{k_i},\ldots,k_n;k_0}
\end{equation}
where we have introduced the following shorthand for the above strings of tensors:
\begin{displaymath}
\begin{split}
\mathbf{a}_1 &:= (a_1 \cdots a_{r(1)-1}) \\
\mathbf{a}_i &:= (a_{r(i-1)+1} \cdots a_{r(i)-1}), \quad 2\leq i \leq l_0 \\
\mathbf{a}_{l_0+1} &:= (a_{r(l_0)+1} \cdots a_{\sum_{j=1}^m l_j}) \\
\mathbf{b}_i &:= (b_{i1}\cdots b_{ik_i})
\end{split}
\end{displaymath}

Now we would like to compute
\[ \Phi^k(\Gamma_1)\circ\Phi^k(\Gamma_2) = [\beta_{c_1}]_{k_1,\ldots,k_n;k_0} \circ [\beta_{c_2}]_{l_1,\ldots,l_m;l_0}. \]
By definition this is
\[ \Phi^k(\Gamma_1)\circ\Phi^k(\Gamma_2) = \sum_{\begin{subarray}{c} 1\leq i\leq n: \\ l_0=k_i \end{subarray}}\sum_{\sigma\in\mathbb{S}_{l_0}} [\beta_{c_1}\circ_{i,\sigma}\beta_{c_2}]_{l_1,\ldots,l_m,k_1,\ldots,\hat{k_i},\ldots,k_n;k_0} \]
where
\begin{multline*}
\beta_{c_1}\circ_{i,\sigma}\beta_{c_2}:= \\ \sum_{i_1,i_2,\ldots,i_{l_0}=1}^k \langle x_{i_1}^*,b_{i\sigma^{-1}(1)} \rangle^{-1} \cdots \langle x_{i_{l_0}}^* , b_{i\sigma^{-1}(l_0)} \rangle^{-1} \mathbf{a}_1 x_{i_1}^*\mathbf{a}_2 x_{i_2}^* \mathbf{a}_3\cdots\mathbf{a}_{l_0}x_{i_{l_0}}^*\mathbf{a}_{l_0+1} \mathbf{b}_1\cdots\widehat{\mathbf{b}_i}\cdots\mathbf{b}_n\cdot\mathbf{b}_0
\end{multline*}
By Equation \eqref{eqn_invinnprod} this is equal to \eqref{eqn_tensorcompose}, hence we have shown that
\begin{equation} \label{eqn_prelieidentity}
\Phi^k(\Gamma_1\circ\Gamma_2) = \Phi^k(\Gamma_1)\circ\Phi^k(\Gamma_2)
\end{equation}
from which the theorem follows. The following picture should assist the reader in visualizing the details of the argument.

\begin{figure}[htp]
\centering
\includegraphics{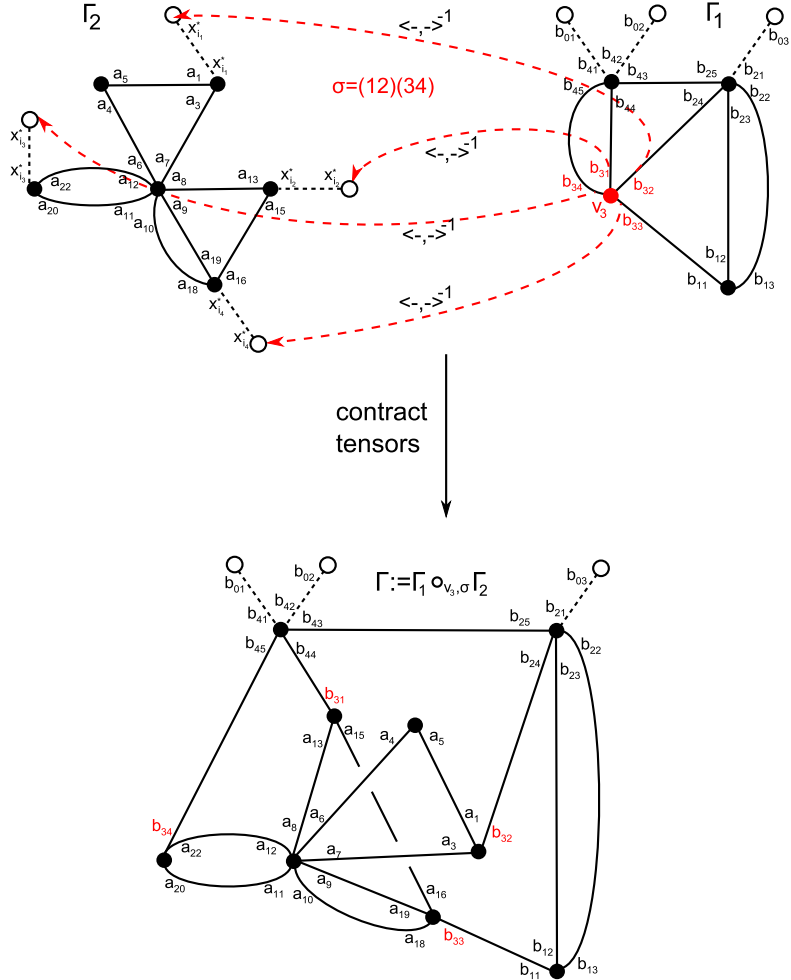}
\caption{Contracting tensors according to the pre-Lie structure on $\mathfrak{l}^n$ corresponds to inserting one graph into another.}
\end{figure}
\end{proof}

This yields the following nongraphical description of the Connes-Kreimer Hopf algebra.

\begin{cor}
There is a natural isomorphism of Hopf algebras
\begin{displaymath}
\begin{array}{ccc}
\widehat{\mathcal{U}}(\mathcal{P}\mathfrak{l}^\infty_+) & \to & \widehat{\mathcal{H}}^+, \\
x_1\cotimes\cdots\cotimes x_k & \mapsto & \Phi^{-1}(x_1)\star\cdots\star\Phi^{-1}(x_k).
\end{array}
\end{displaymath}
\end{cor}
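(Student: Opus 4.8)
The plan is to assemble the desired Hopf algebra isomorphism from pieces that have already been established. Recall that Theorem~\ref{thm_milmor} gives a canonical isomorphism $\widehat{U}(\widehat{\mathcal{H}}_c^+)\cong\widehat{\mathcal{H}}^+$ of Hopf algebras, sending $\Gamma_1\cotimes\cdots\cotimes\Gamma_k\mapsto\Gamma_1\star\cdots\star\Gamma_k$. By Proposition~\ref{prop_mainiso}, the map $\Phi$ restricts to a linear isomorphism $\widehat{\mathcal{H}}_c^+\to\mathcal{P}\mathfrak{l}^\infty_+$, and by Theorem~\ref{thm_prelieiso} this is in fact an isomorphism of pre-Lie algebras. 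The first step is therefore to observe that $\Phi$ is an isomorphism of Lie algebras $\widehat{\mathcal{H}}_c^+\to\mathcal{P}\mathfrak{l}^\infty_+$: the Lie bracket on each side is the antisymmetrization of the respective pre-Lie product (on $\widehat{\mathcal{H}}_c^+$ this is the content of the theorem preceding Section~\ref{sec_nongraphical}, $[\Gamma_1,\Gamma_2]=\Gamma_1\circ\Gamma_2-\Gamma_2\circ\Gamma_1$; on $\mathcal{P}\mathfrak{l}^\infty_+$ the Lie bracket inherited from the bialgebra $\mathfrak{l}^\infty$ must be identified with the antisymmetrization of $\circ$, which is the pre-Lie structure extended to $\mathfrak{l}^\infty_+$ via the lemma preceding Theorem~\ref{thm_prelieiso}). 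Granting this, the functoriality of the completed universal enveloping algebra construction produces an isomorphism $\widehat{\mathcal{U}}(\widehat{\mathcal{H}}_c^+)\to\widehat{\mathcal{U}}(\mathcal{P}\mathfrak{l}^\infty_+)$ of Hopf algebras in $\mathcal{P}\mathrm{Vect}$, sending $\Gamma_1\cotimes\cdots\cotimes\Gamma_k\mapsto\Phi(\Gamma_1)\cotimes\cdots\cotimes\Phi(\Gamma_k)$.

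Composing the inverse of this last map with the isomorphism of Theorem~\ref{thm_milmor} yields
\[
\widehat{\mathcal{U}}(\mathcal{P}\mathfrak{l}^\infty_+)\xrightarrow{\ \widehat{\mathcal{U}}(\Phi^{-1})\ }\widehat{\mathcal{U}}(\widehat{\mathcal{H}}_c^+)\xrightarrow{\ \sim\ }\widehat{\mathcal{H}}^+,
\]
and tracking a generator $x_1\cotimes\cdots\cotimes x_k$ through this composite gives exactly $\Phi^{-1}(x_1)\star\cdots\star\Phi^{-1}(x_k)$, as required. The second step is to check that this composite is genuinely a morphism of Hopf algebras and not merely of bialgebras; this is automatic, since a bialgebra map between Hopf algebras in any symmetric monoidal category automatically commutes with the antipode, and both $\widehat{\mathcal{U}}(\mathcal{P}\mathfrak{l}^\infty_+)$ and $\widehat{\mathcal{H}}^+$ carry antipodes (the latter as the restriction of the antipode $\hat S$ on $\widehat{\mathcal{H}}$).

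The main obstacle I expect is the identification, in the first step, of the Lie bracket that $\mathcal{P}\mathfrak{l}^\infty_+$ inherits as a subspace of primitives of the bialgebra $(\mathfrak{l}^\infty,\mu,\delta)$ with the commutator of the pre-Lie product $\circ$ on $\mathfrak{l}^\infty_+$. A priori the bialgebra $\mathfrak{l}^\infty$ is \emph{commutative and cocommutative}, so its space of primitives carries the zero bracket from the bialgebra structure alone --- the nontrivial Lie structure must come through the identification with $\widehat{\mathcal{H}}_c^+$ (which is primitive for the cocommutative coproduct $\Delta$, not for $\delta$). In other words, one must be careful that the relevant Lie algebra is the one governing the \emph{noncocommutative} Connes--Kreimer coproduct: concretely, transport the bracket $[\Gamma_1,\Gamma_2]=\Gamma_1\circ\Gamma_2-\Gamma_2\circ\Gamma_1$ across $\Phi$ using Theorem~\ref{thm_prelieiso}, so that the Lie structure on $\mathcal{P}\mathfrak{l}^\infty_+$ is \emph{defined} to be the commutator of $\circ$, and the statement $\Phi([\Gamma_1,\Gamma_2])=\Phi(\Gamma_1)\circ\Phi(\Gamma_2)-\Phi(\Gamma_2)\circ\Phi(\Gamma_1)$ is then immediate from Equation~\eqref{eqn_prelieidentity}. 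Once this bookkeeping is done correctly, the remainder is the routine functoriality and convergence check already carried out for the analogous map \eqref{eqn_envelopemap} in the proof of Theorem~\ref{thm_milmor}; in particular the grading estimates \eqref{eqn_grading} ensure the completed tensor products and infinite $\star$-products converge in the inverse limit topology.
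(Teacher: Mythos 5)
Your proposal is correct and follows essentially the same route as the paper, whose entire proof is the citation ``This follows from Theorem \ref{thm_prelieiso} and Theorem \ref{thm_milmor}''; you have simply spelled out the functoriality of $\widehat{\mathcal{U}}$ and the composition that the paper leaves implicit. Your remark that the Lie structure on $\mathcal{P}\mathfrak{l}^\infty_+$ must be taken as the commutator of the pre-Lie product $\circ$ (rather than anything induced by the cocommutative coproduct $\delta$) is a legitimate and worthwhile clarification of a point the paper glosses over, but it does not change the argument.
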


\begin{proof}
This follows from Theorem \ref{thm_prelieiso} and Theorem \ref{thm_milmor}.
\end{proof}


\begin{thebibliography}{ABC00}
%
\bibitem[BP57]{bogpar}
N. N. Bogoliubow, O. S. Parasiuk; \emph{\"Uber die Multiplikation der Kausalfunktionen in der Quantentheorie der Felder.} (German) Acta Math. 97 1957, 227-–266.
%
\bibitem[Co84]{collins}
J. C. Collins, \emph{Renormalization. An introduction to renormalization, the renormalization group, and the operator-product expansion.} Cambridge Monographs on Mathematical Physics. Cambridge University Press, Cambridge, 1984.
%
\bibitem[CK00]{CKhopf}
A. Connes, D. Kreimer; \emph{Renormalization in quantum field theory and the Riemann-Hilbert problem. I. The Hopf algebra structure of graphs and the main theorem.} Comm. Math. Phys. 210 (2000), no. 1, 249–-273.
%
\bibitem[CK02]{CKelim}
A. Connes, D. Kreimer; \emph{Insertion and elimination: the doubly infinite Lie algebra of Feynman graphs.} Ann. Henri Poincar\'e 3 (2002), no. 3, 411-–433.
%
\bibitem[Cl11]{costeff}
K. Costello, \emph{Renormalization and effective field theory.} Mathematical Surveys and Monographs 170. AMS, 2011.
%
\bibitem[He66]{hepp}
K. Hepp, \emph{Proof of the Bogoliubov-Parasiuk theorem on renormalization.} Comm. Math. Phys. 2 (1966), no. 1, 301--326.
%
\bibitem[Ko93]{kont}
M. Kontsevich, \emph{Formal noncommutative symplectic geometry.} The Gelfand Mathematical Seminars, 1990-1992, pp. 173--187, Birkh\"auser Boston, Boston, MA, 1993.
%
\bibitem[Lo98]{loday}
J. L. Loday, \emph{Cyclic homology.} Second edition. Grundlehren der Mathematischen Wissenschaften, 301. Springer-Verlag, Berlin, 1998.
%
\bibitem[MM65]{milnormoore}
J. W. Milnor, J. C. Moore; \emph{On the structure of Hopf algebras.} Ann. of Math. (2) 81 1965 211–-264.
%
\bibitem[Zi69]{zimmer}
W. Zimmermann, \emph{Convergence of Bogoliubov's method of renormalization in momentum space.} Comm. Math. Phys. 15, 1969, 208-–234.
\end{thebibliography}
\end{document}